\theoremstyle{plain}
\newtheorem{theorem}{Theorem}
\newtheorem{coro}[theorem]{Corollary}
\newtheorem{lemma}[theorem]{Lemma}
\theoremstyle{definition}
\newtheorem{definition}[theorem]{Definition}
\newtheorem{example}[theorem]{Example}
\newtheorem{rmk}[theorem]{Remark}
\numberwithin{theorem}{section}
\newcommand{\ZZ}{\mathbb{Z}} 
\newcommand{\NN}{\mathbb{N}} 
\newcommand{\CC}{\mathbb{C}_\infty} 
\newcommand{\Fq}{\mathbb{F}_q} 
\renewcommand{\AA}{A} 
\newcommand{\ppi}{\widetilde{\pi}}
\newcommand{\pp}{p} 
\DeclareMathOperator{\Spec}{Spec}
\newcommand{\LA}{\Lambda}
\newcommand{\la}{\lambda}
\newcommand{\ppp}{\mathfrak{p}}
\newcommand{\GL}{\text{GL}_2(\AA)}
\newcommand{\HHH}{\Omega}
\newcommand{\MForms}{M_{k, m} (\GL)}
\newcommand{\SForms}{S_{k, m} (\GL)}
\DeclareMathOperator{\kere}{ker}
\DeclareMathOperator{\TT}{T}
\newcommand{\imp}{\Longrightarrow}
\newcommand{\delt}{\varDelta}
\DeclareMathOperator{\valu}{val}
\newcommand{\Ga}{\Gamma}
\newcommand{\dell}{\varDelta}
\DeclareMathOperator{\mini}{min}
\DeclareMathOperator{\anvarphi}{\widetilde{\varphi}}
\newcommand{\aalpha}{\xi}
\DeclareMathOperator{\degr}{deg}
\journal{arXiv.org}
\begin{document}

\begin{frontmatter}



\title{$\AA$-expansions of Drinfeld modular forms}


\author{Aleksandar Petrov}

\address{Texas A\&M University at Qatar}

\begin{abstract}
We introduce the notion of Drinfeld modular forms with $\AA$-expansions, where instead of the usual Fourier expansion in $t^n$ ($t$ being the uniformizer at `infinity'), parametrized by $n \in \mathbb{N}$, we look at expansions in $t_a$, parametrized by  $a \in A = \Fq [T]$. We construct an infinite family of eigenforms with $\AA$-expansions. Drinfeld modular forms with $\AA$-expansions have many desirable properties that allow us to explicitly compute the Hecke action.  
The applications of our results include: (i)~various congruences between Drinfeld eigenforms; 
(ii)~the computation of the eigensystems of Drinfeld modular forms with $\AA$-expansions;  
(iii)~examples of failure of multiplicity one result, as well as a restrictive multiplicity one result for Drinfeld modular forms with $\AA$-expansions; 
(iv)~examples of eigenforms that can be represented as `non-trivial' products of eigenforms; (v)~an extension of a result of B\"{o}ckle and Pink concerning the Hecke properties of the space of cuspidal modulo double-cuspidal forms for $\Gamma_1(T)$ to the groups $\text{GL}_2 (\Fq [T])$ and $\Gamma_0(T)$.
\end{abstract}

\begin{keyword} Drinfeld modular forms \sep Expansions of Drinfeld modular forms


\end{keyword}

\end{frontmatter}

\section{Introduction}

Drinfeld modular forms are certain analogues of classical modular forms that were first introduced by Goss in \cite{Go1}, \cite{Go2}. Drinfeld modular forms have many properties that are similar to classical modular forms. However, there are several important differences that to this day make the theory of Drinfeld modular forms less understood in comparison to the classical theory. Arguably the most significant differences are: the apparent disconnect between the coefficients in the expansions at `infinity' on one hand, and the Hecke operators and eigenvalues on the other (the first indexed by the natural numbers, while the latter is indexed by the monic univariate polynomials over a finite field); the lack of transparent arithmetic significance of the coefficients due to that disconnect; the lack of diagonalizability of the Hecke action; and the lack of multiplicity one property (if we assume that the notion of multiplicity one remains the same as in the classical case). 

In the present work, we aim to address the points above by considering the concept of Drinfeld modular forms with $\AA$-expansions. Such forms have many desirable properties with respect to the Hecke algebra because the Hecke action on an $\AA$-expansion is easily computable. 

Before we introduce Drinfeld modular forms with $\AA$-expansions, we need to recall some notation from \cite{Gek}. Let $p$ be a rational prime and $q = p^e$. Let $\AA = \Fq[T]$ be the ring of univariate polynomials over $\Fq$, $\AA_+$ be the set of monic elements of~$\AA$, $K$ the fraction field of $\AA$, $K_\infty$ the completion of $K$ with respect to the valuation coming from $1/T$, and let $\CC$ be the completion of the algebraic closure of $K_\infty$. Let $\rho$ stand for the Carlitz module. We denote by $e_{\ppi \AA}$ the Carlitz exponential and by $\ppi$ a fixed choice for a period of the Carlitz module. If $\LA$ is a lattice, then its $n^\text{th}$ Goss polynomial will be denoted by $G_{n, \LA}(X)$. A primed sum $\sideset{}{'} \sum$ will denote that $0$ is omitted from the set over which we are summing.

Given $k \in \NN$, an integer $m$, $0 \leq m \leq q-1$, and a congruence subgroup $\Gamma \subset \GL$, we let $M_{k, m}(\Gamma)$ denote the space of Drinfeld modular forms for $\Gamma$ of weight $k$ and type $m$. The subspaces of $M_{k, m}(\Gamma)$ that consist of cuspidal and double-cuspidal Drinfeld modular forms will be denoted by $S_{k, m}(\Gamma)$ and $S_{k, m}^2(\Gamma)$, respectively. Every $f \in M_{k, m}(\GL)$ has a power series expansion in $t := t(z) = 1/e_{\ppi \AA}(\ppi z)$:
\[
    f(z) = \sum_{n = 0}^\infty a_n t^n, \qquad \qquad a_n \in \CC. 
\]
The expression on the right-hand side only converges for $z$ in a neighborhood of `infinity', but it determines $f$ uniquely for any $z$. Given $a \in \AA_+$ of degree~$d$, we let $t_a := t(az)$. One can show that $t_a$ can be expanded in a power series in $t$:
\[
	t_a  = \frac{t^{q^d}}{\rho_a (t^{-1}) t^{q^d}} = \frac{t^{q^d}}{1 + \cdots} = t^{q^d} + \cdots.
\]
We define $G_n (X) := G_{n, \ppi \AA}(X)$ which conforms with the notation in \cite[(3.4)]{Gek}. Property (iv) in \cite[(3.4)]{Gek} shows that $X \mid G_n (X)$ for any positive $n$. 

Consider the formal series indexed by $A_+$
\[
	\sum_{a \in A_+} c_a G_n (t_a) \in \CC [[t]],
\]
where $c_a \in \CC$. If $|c_a|$ has polynomial growth in $|a|$ for all but finitely many $a \in A_+$, then the series converges to a well-defined function on $\{z \in \HHH: |z|_i > 1 \}$. Indeed, for such $z$, Lemma (5.5) from \cite{Gek} shows that $|t| = |t(z)| \leq q^{-|z|_i}$. But then $X \mid G_n (X)$ implies that
\[
	|G_n (t_a)| \leq |t_a| = |t|^{|a|} \leq q^{-|a||z|_i} < q^{-|a|}.
\]
Therefore if $|c_a|$ has polynomial growth in $|a|$, then for $z$ with $|z|_i > 1$
\[
	\lim_{|a| \to \infty} |c_a G_n (t_a)| \to 0,
\]
and
\[
	\sum_{a \in A_+} c_a G_n (t_a) = \sum_{a \in A_+} c_a G_n \left (t(az) \right)
\]
converges. Such series are the main topic of the present paper:

 \begin{definition} \label{def: aexp}
A modular form $f \in \MForms$ is said to have an \emph{$\AA$-expansion} if there exists a positive integer $n$ and coefficients $c_0(f), c_a(f) \in \CC$  such that
\[
 f = c_0(f) + \sum_{a \in A_+} c_a(f) G_n(t_a).
\]
 Here the equality above is meant as an equality in $\CC[[t]]$ between the $t$-expansion of $f$ on the left side and the expression on the right side. We will call the integer~$n$ an \emph{$\AA$-exponent} of $f$ and the number $c_a = c_a(f)$ the \emph{$a^\text{th}$ coefficient} of $f$. Since $t \mid G_n (t_a)$ for any $n \in \NN$, $a \in \AA_+$, if $f \in \SForms$, then $c_0(f) = 0$. Theorem~\ref{uniq} below will show that if we fix the $\AA$-exponent~$n$, then the $\AA$-expansion is unique.\end{definition}

\begin{rmk} If we consider the action of scalar matrices on $f$ it is easy to see that $n \equiv m \bmod (q-1)$. We will show that if $f$ is a simultaneous eigenform, then the $\AA$-exponent $n$ of $f$ is unique. This will follow from our multiplicity one result for forms with $\AA$-expansions (Theorem \ref{aexpmultone}). For a~general Drinfeld modular form $f$ with $\AA$-expansion we do not know, but we strongly suspect, that the $\AA$-exponent $n$ is unique.
\end{rmk}

Examples of such forms have been known to Goss in the form of  Eisenstein series, which until recently appeared to be the only examples. In a recent paper~\cite{Lop1}, L\'{o}pez showed two additional examples of Drinfeld modular forms with $\AA$-expansions. We prove that there are infinitely many examples of cuspidal Drinfeld eigenforms with $\AA$-expansions. 

\begin{theorem} \label{mainthm}
 Let $k, n$ be two positive integers such that $k - 2n$ is a positive multiple of $(q-1)$ and $n \leq p^{\valu_p(k-n)}$. Then
\[
  f_{k, n} := \sum_{a \in \AA_+} a^{k-n} G_n(t_a) 
\] is an element of $S_{k, m}(\GL)$, where $m \equiv n \bmod (q-1)$.
\end{theorem}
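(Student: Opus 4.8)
The plan is to verify that the formal series $f_{k,n} := \sum_{a \in \AA_+} a^{k-n} G_n(t_a)$ defines an element of $\CC[[t]]$ that is the $t$-expansion of an honest holomorphic function on $\HHH$, and then to check that this function transforms like a modular form of weight $k$ and type $m$ for $\GL$, and that it vanishes to order $\geq 2$ at infinity. For the convergence of the series to a holomorphic function on all of $\HHH$ (not just near infinity), I would observe that the coefficients $c_a = a^{k-n}$ have $|c_a| = |a|^{k-n}$ of polynomial growth in $|a|$, so the discussion preceding Definition~\ref{def: aexp} already gives a well-defined function on $\{z : |z|_i > 1\}$; extending to all of $\HHH$ requires rewriting the sum using the action of $\GL$, which is exactly the content of the modularity step, so these two points should be handled together.

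\medskip

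The heart of the argument is the modular transformation law. Here I would follow the strategy that presumably underlies L\'opez's examples and Gekeler's Eisenstein-series computations: write $G_n(t_a) = G_n(t(az))$ and recall that $t(z)^{-1} = e_{\ppi\AA}(\ppi z)$, so that $t(az)^{-1} = \rho_a(t(z)^{-1})$, i.e. $G_n(t_a)$ is, up to the Goss polynomial, the reciprocal of a value of the Carlitz $\AA$-module. Concretely one expects an identity expressing $\sum_{a \in \AA_+} a^{k-n} G_n(t_a)$ as a "Poincar\'e-type" or Eisenstein-type series summed over (a subset of) $\AA^2$ or over $\AA_+$-orbits, of the shape $\sum_{(c,d)} \frac{(\text{something})}{(cz+d)^{k}}$, whose weight-$k$ invariance under $\GL$ is then the standard reindexing argument. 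The numerical hypotheses $k - 2n \in (q-1)\NN_{>0}$ and $n \leq p^{\valu_p(k-n)}$ are precisely what make such an identity hold: the congruence $k \equiv 2n$ controls the type and guarantees the series is built from the correct power $a^{k-n}$, while the $p$-adic condition $n \le p^{\val_p(k-n)}$ is the divisibility condition (coming from the binomial/Lucas-type vanishing in characteristic $p$) that kills the error terms obstructing the functional equation — I expect this to be the main obstacle, and I would prove it by expanding $G_n$ via its defining recursion \cite[(3.4)]{Gek} and using the additivity of $\rho_a$ to collapse a double sum.

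\medskip

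Once weight-$k$ modularity for $\GL$ is established, holomorphy on all of $\HHH$ follows because any $z \in \HHH$ can be moved by $\GL$ into the region $|z|_i > 1$ where the series converges, and holomorphy at infinity is immediate from $t \mid G_n(t_a)$: every term lies in $t\CC[[t]]$, so $c_0(f_{k,n}) = 0$ and $f_{k,n} \in S_{k,m}(\GL)$ is at least single-cuspidal. To upgrade to double-cuspidality $f_{k,n} \in S_{k,m}^2(\GL)$, I would examine the $t$-expansion more closely: since $t_a = t^{q^{\deg a}} + \cdots$ and $G_n(X) \in X\CC[X]$ with $G_n(X) = X + \cdots$, the only contribution to the coefficient of $t^1$ comes from $a = 1$ with $G_n(t_1) = G_n(t) = t + \cdots$, giving coefficient $1^{k-n}\cdot(\text{leading coeff of }G_n) $; wait — this shows the $t^1$-coefficient is nonzero, so one must instead recall (as in Gekeler) that for $\GL$ the relevant expansion parameter for cusp conditions is effectively $t$ but the order of vanishing must be computed against the correct normalization, and double-cuspidality for $\GL$-forms is the condition that the form lies in the image of multiplication by the cusp form $\Delta$ (equivalently the expansion starts at a sufficiently high power after accounting for the discriminant); I would verify this by the weight/type bookkeeping together with the explicit low-order terms of the $t$-expansion. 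The type assertion $m \equiv n \pmod{q-1}$ follows from the action of scalar matrices $\begin{pmatrix} \gelt & 0 \\ 0 & \gelt \end{pmatrix}$ on $t_a$, as already noted in the remark after Definition~\ref{def: aexp}.
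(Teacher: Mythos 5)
Your outline correctly identifies where the difficulty lies, but it does not actually overcome it, and the step you defer is the entire content of the theorem. Using the basic property of Goss polynomials, $G_n(t_a) = \ppi^{-n}\sum_{b\in\AA}(az+b)^{-n}$, the series becomes $\ppi^{-n}\sum_{a\in\AA_+}\sum_{b\in\AA} a^{k-n}(az+b)^{-n}$. This is \emph{not} a Poincar\'e or Eisenstein series of weight $k$ in any standard sense: the denominator carries weight $n$, not $k$, and the excess weight $k-n$ sits in the numerator as $a^{k-n}$ rather than as a power of $(az+b)$. Consequently there is no ``standard reindexing argument'' for the substitution $z\mapsto -1/z$; a naive reindexing would produce weight $n$. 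The actual proof must convert $a^{k-n}$ into $(az+b)^{-(k-n)}$, and this is done by a non-obvious regularization: one writes the sum as a finite sum over $(u,v)\in \AA_{<d}^2$ plus a double sum over $(a,b)\in (T^d\AA)^2$ and $(u,v)\in\AA_{<d}^2$, and proves the identity
\[
\sum_{(u,v)\in\AA_{<d}^2}\frac{(a+u)^{k-n}}{((a+u)z+b+v)^n}
=\sideset{}{'}\sum_{(u,v)\in\AA_{<d}^2}\frac{(bu-av)^{k-n}}{(az+b)^{k-n}((a+u)z+b+v)^n},
\]
whose right-hand side visibly transforms with weight $k$. This identity in turn rests on two inputs you never invoke: (i) the vanishing of the power sums $\sum_{a\in\AA_{<d}}a^j$ for $d$ large (Lee/Carlitz), and (ii) the polynomial factorization $T^{k-n}-1=(T-1)^nF(T)$, which is exactly equivalent to the hypothesis $n\le p^{\valu_p(k-n)}$ and is used through the identity $X^{k-n}-Y^{k-n}=Y^{k-2n}(X-Y)^nF(X/Y)$. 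Your proposed route of ``expanding $G_n$ via its defining recursion and using additivity of $\rho_a$'' does not engage with this weight mismatch, so the functional equation is left unproved.

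Two smaller points. First, your cuspidality discussion is self-contradictory and also aims at the wrong target: the theorem asserts $f_{k,n}\in S_{k,m}(\GL)$, i.e.\ (single) cuspidality, which follows immediately from $t\mid G_n(t_a)$; double-cuspidality is not claimed (and indeed fails for the forms $f_s$, whose $t$-coefficient at $t^1$ is $1$, as your own computation shows before the ``wait''). Second, extending holomorphy from $\{|z|_i>1\}$ to all of $\HHH$ via the $\GL$-action is fine in principle, but it presupposes the transformation law, so it cannot be cited independently of the missing step.
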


We will prove this result in Section~\ref{the proof}.

Properties of $\AA$-expansions (Theorem~\ref{aexpeigenvalues} below) show that the form $f_{k, n}$ is an eigenform with eigensystem $\{ \la_\ppp = \wp^n \}$. Therefore, Theorem~\ref{mainthm} produces infinitely many examples of cuspidal Hecke eigenforms with explicit eigensystems. Theorem~\ref{mainthm} has other important consequences:
\begin{itemize}
\item it shows that the space of single-cuspidal modulo double-cuspidal Drinfeld modular forms is parametrized by forms with $\AA$-expansions for $\Gamma = \GL, \Gamma_1(T), \Gamma_0(T)$ (Theorem~\ref{cuspmoddcuspGL}, Example~\ref{ex18}, Theorem~\ref{missingtheorem}, respectively);
\item it provides examples of `non-trivial' congruences between eigenforms (Theorem~\ref{aexpcongruence});
\item it suggests interesting eigenproduct identities (Remark~\ref{cj1} and Theorem~\ref{products}) in the case of Drinfeld modular forms. 
\end{itemize}

\section{Properties of Drinfeld Modular Forms with $\AA$-expansions}

Before we turn to the proof of Theorem~\ref{mainthm}, we discuss the properties of Drinfeld modular forms with $\AA$-expansions that make them so useful.

Drinfeld modular forms with $\AA$-expansions were among the first concrete examples of Drinfeld modular forms considered by Goss \cite[Sec.~2]{Go2}:

\begin{example} [Eisenstein series]
Let $k$ be a positive integer and consider the (non-normalized) Eisenstein series:
\[ 
  E_k (z) :=    \frac{1}{\ppi^k} \sideset{}{'} \sum_{(a, b) \in \AA^2} \frac{1}{(a z + b)^k} = \sideset{}{'} \sum_{b \in \AA} \frac{1}{(\ppi b)^k} - \sum_{a \in \AA_+} \sum_{b \in \AA} \frac{1}{(\ppi a z + \ppi b)^k}. 
\]
It is known that $E_k \in M_{k, 0} (\GL)$. If $k \not \equiv 0 \bmod (q-1)$, then $E_k = 0$. Assume that $ k  \equiv 0 \bmod (q-1)$. Then $E_k$ is not cuspidal and we can renormalize so that its first non-zero coefficient is $1$ to obtain: 
\[
  g_k := \frac{1}{\delta_k} E_k =   1 - \frac{1}{\delta_k} \sum_{a \in \AA_+} G_k(t_a), \qquad \qquad \text{where~} \delta_k = \sideset{}{'} \sum_{b \in \AA} \frac{1}{(\ppi b)^k}. \]
This is our first family of examples of Drinfeld modular forms with $\AA$-expansions. Following \cite{Gek}, we define $g := g_{q-1}$. 
\end{example}

Until the work of L\'{o}pez \cite{Lop1} in 2010, the family of normalized Eisenstein series $\{ g_k \}_{ k \equiv 0 \bmod (q-1)}$ gave the only examples of Drinfeld modular forms with $\AA$-expansions. L\'{o}pez considered $\AA$-expansions with $\AA$-exponents $n = 1$ and $n=q-1$, i.e., $\AA$-expansions in terms of $G_1(t_a) = t_a$ and $G_{q-1}(t_a) = t_a^{q-1}$. 
L\'{o}pez showed that there are two additional examples: the theoretically important (see \cite[(5.12) \& (5.13)]{Gek}) forms $h$ and~$\dell$. L\'{o}pez proved that
\[
  h = \sum_{a \in \AA_+} a^q t_a, \qquad \qquad \dell = \sum_{a \in \AA_+} a^{q(q-1)} t_a^{q-1}.
\]
The reader should note that we have normalized both $h$ and $\dell$ so that their respective expansions have $1$ as the first non-zero coefficient.

Theorem~\ref{mainthm} shows that $h$ and $\dell$ are just two examples in a whole family of infinitely many Drinfeld modular forms that possess $\AA$-expansions. All of the new examples are cuspidal or double-cuspidal eigenforms and we will be able to explicitly compute their eigensystems. The first result that we will need is:

\begin{theorem} [Uniqueness of an $\AA$-expansion] \label{uniq}
 \[
   c_0 + \sum_{a \in \AA_+} c_a G_n(t_a) = c_0' + \sum_{a \in \AA_+} c_a' G_n(t_a) \ \imp \  c_a = c_a' \quad \forall a \in \AA_+ \cup \{ 0 \}.
 \]
\end{theorem}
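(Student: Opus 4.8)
The plan is to reduce to the linear independence of the functions $G_n(t_a)$, $a\in\AA_+$, inside $\CC[[t]]$, and then to establish that independence with a $t$-adic valuation argument. Subtracting the two sides, it suffices to show that $\sum_{a\in\AA_+}b_a G_n(t_a)=c$ in $\CC[[t]]$, for some $c\in\CC$, forces $c=0$ and $b_a=0$ for every $a$. Comparing constant terms at once gives $c=0$, since $t\mid G_n(t_a)$ for every $a$ (this is where $X\mid G_n(X)$ enters); and the left-hand side is a well-defined element of $\CC[[t]]$ for arbitrary coefficients $b_a$, because $\orde_t G_n(t_a)\to\infty$ as $\degr a\to\infty$, so every coefficient of $t$ is a finite sum. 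The remaining content is the implication $\sum_{a\in\AA_+}b_a G_n(t_a)=0\ \imp\ b_a=0$ for all $a$.

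Set $n_0:=\orde_X G_n(X)$; then $n_0\ge 1$ (again by $X\mid G_n$) and $G_n(X)=\gamma X^{n_0}+\cdots$ with $\gamma\neq 0$. Since $t_a=t^{q^{\degr a}}+\cdots$, this gives $\orde_t G_n(t_a)=n_0 q^{\degr a}$, which is constant on each set $\{a:\degr a=d\}$ and strictly increasing in $d$. I would then prove, by induction on $d$, that $b_a=0$ whenever $\degr a=d$: granting this for all smaller degrees, the relation becomes $\sum_{\degr a\ge d}b_a G_n(t_a)=0$, so the \emph{finite} sum $\sum_{\degr a=d}b_a G_n(t_a)$ equals $-\sum_{\degr a>d}b_a G_n(t_a)$, whose $t$-order is at least $n_0 q^{d+1}$. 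Hence it suffices to prove the following finite statement for each $d$: no nontrivial $\CC$-linear combination of $\{G_n(t_a):a\in\AA_+,\ \degr a=d\}$ has $t$-order $\ge n_0 q^{d+1}$ — equivalently, the $q^d$ truncations $G_n(t_a)\bmod t^{\,n_0 q^{d+1}}$ are $\CC$-linearly independent.

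For this finite statement I would use the expansion $t_a=\dfrac{t^{q^d}}{1+\sum_{i<d}e_i(a)\,t^{\,q^d-q^i}}$ recalled in the paper, where the coefficients $e_i(a)$ of $\rho_a$ are $\Fq$-linear in the coefficients of $a$ and $e_0(a)=a$, so that distinct monic $a$ of degree $d$ give distinct tuples $(e_0(a),\dots,e_{d-1}(a))$. Reading off the coefficient of $t^j$ of $\sum_{\degr a=d}b_a G_n(t_a)$ for $n_0 q^d\le j<n_0 q^{d+1}$ yields linear relations on $(b_a)_{\degr a=d}\in\CC^{q^d}$ whose coefficients are fixed scalars (built from the coefficients of $G_n$ and characteristic-$p$ binomial coefficients) times polynomial expressions in the $e_i(a)$; since $a\mapsto a=e_0(a)$ is injective on monic polynomials of degree $d$, a Vandermonde argument in the $q^d$ distinct values of $a$ shows that such polynomial functions of $a$ already span $\CC^{q^d}$, so the task is to check that the relations coming from this range of $j$ are enough to force $(b_a)=0$. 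For $d=1$, $n=1$ this is transparent: the relation becomes $\sum_{c\in\Fq}\dfrac{b_{T+c}}{1+(T+c)\,t^{q-1}}=0$, a rational function of $t^{q-1}$ with simple poles at the $q$ distinct points $-(T+c)^{-1}$, so every $b_{T+c}=0$. The main obstacle is exactly this bookkeeping for general $n$ and $d$: the scalars involved are characteristic-$p$ binomial coefficients $\binom{-i}{m}$, which vanish mod $p$ for many indices, so one has to verify (e.g.\ via Lucas' theorem) that enough independent relations still occur below $t^{\,n_0 q^{d+1}}$. Conceptually the independence is clearest after substituting $s=1/t$: then $G_n(t_a)=G_n(1/\rho_a(s))$ is a rational function of $s$ whose poles lie on the Carlitz torsion $\kere\rho_a\isomo\AA/a$, and since $\AA/a$ is cyclic there is, for each $a$, a torsion point of exact order $a$ that lies on no $\kere\rho_{a'}$ with $a'\neq a$ monic of the same degree; that point isolates $b_a$.
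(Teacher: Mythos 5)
Your reduction is sound as far as it goes: subtracting the two expansions, noting $t\mid G_n(t_a)$ to kill the constant term, observing $\orde_t G_n(t_a)=n_0q^{\degr a}$ with $n_0=\orde_X G_n\ge 1$, and inducting on $\degr a$ correctly reduces the theorem to the finite statement that the truncations $G_n(t_a)\bmod t^{\,n_0q^{d+1}}$, for $a$ monic of degree $d$, are $\CC$-linearly independent. But that finite statement is precisely the content of the theorem, and you do not prove it: you establish it only for $d=1$, $n=1$, and for the general case you yourself flag "the main obstacle is exactly this bookkeeping" and leave open whether enough of the characteristic-$p$ multinomial coefficients survive to force $(b_a)_{\degr a=d}=0$. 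A dimension count is not reassuring on its own (there are $q^d(q-1)n_0$-ish coefficient functionals against $q^d$ unknowns, but independence is exactly what is at stake), so as written the proposal is a correct reduction followed by an unproved lemma.

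The two repair strategies you sketch do not close the gap in their current form. The Vandermonde remark shows that the powers $a^m$, $0\le m<q^d$, span all functions on the degree-$d$ monics, but it does not show that those powers occur with nonzero coefficients among the relations extracted from $t^j$, $n_0q^d\le j<n_0q^{d+1}$; that is the whole difficulty. The torsion-point argument (isolating $b_a$ via a pole of $G_n(1/\rho_a(s))$ at a point of exact order $a$) proves that a \emph{finite} linear combination $\sum_{\degr a=d}b_aG_n(t_a)$ vanishes identically only if all $b_a=0$, but in your induction step this combination equals minus an infinite tail of $t$-order $\ge n_0q^{d+1}$, not zero; moreover the hypothesis of the theorem is a formal identity in $\CC[[t]]$ with no growth condition on the coefficients, so evaluation at a torsion point (a specific nonzero value of $t$) is not available for the full series. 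For comparison, the paper does not supply an argument either: it cites L\'opez's Theorem 3.1, which handles $n=q-1$, and asserts the same proof works for general $n$. Your framework is a reasonable skeleton for such a proof, but the essential independence lemma still has to be supplied.
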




\begin{proof} The case $n = q-1$ has been proved by L\'{o}pez in \cite[Thm.~3.1]{Lop2} and the same proof works for general $n$.  
\end{proof}

Next, we turn to Hecke properties of Drinfeld modular forms with $\AA$-expansions. Let $\ppp$ be a  non-zero prime ideal of $\AA$, let $\wp$ be its unique monic generator and let
\[S_\ppp := \{ \beta \in \AA: \degr (\beta) < \degr (\wp) \}.\]
Following \cite[Sec.~3]{Go2} and \cite[Sec.~7]{Gek}, we define the $\ppp^\text{th}$ Hecke operator $\TT_\ppp$:
\[
   \TT_\ppp f (z) := \wp^k f(\wp z) + \sum_{\beta \in S_\ppp} f \left ( \frac{z + \beta}{\wp} \right), \qquad  \text{where $f \in M_{k, m} (\GL)$.}
\]
A Drinfeld modular form $f$ is called a \emph{simultaneous eigenform} or simply an \emph{eigenform}, if there exist  $\la_\ppp$'s in $\CC$ such that
\[
   \TT_\ppp f = \la_\ppp f, \qquad \qquad \forall \ppp \in \Spec (\AA) \setminus \{ 0 \}.
\]
For such an $f$ the values $\{ \la_\ppp \}_{ \ppp \in \Spec(\AA) \setminus \{0 \}}$ will be called the \emph{eigensystem}~of~$f$.
Goss computed the action of $\TT_\ppp$ on the $t$-expansion, which in our notation gives (see \cite[(7.3)]{Gek}):
\[
  \TT_\ppp \left ( \sum_{n = 0}^\infty a_n t^n \right) = \sum_{n = 0}^\infty a_n t_{\wp}^n + \sum_{n = 0}^\infty a_n G_{n, \ppp} ( \wp t).
\]
In the formula above, $G_{n, \ppp}(X)$ is the $n^\text{th}$ Goss polynomial for the lattice $\kere \rho_\ppp$. Drinfeld modular forms with $\AA$-expansions behave even better with respect to the action of $\TT_\ppp$ as the next result shows:

\begin{theorem} \label{aexpeigenvalues} Suppose that $f \in \SForms$ is an eigenform for $\TT_\ppp$ with eigenvalue $\la_\ppp$ and that $f$ has an $\AA$-expansion with exponent~$n$. Then $\la_\ppp = \wp^n$ and $c_\ppp(f) = \wp^{k-n} c_1(f)$. 
\end{theorem}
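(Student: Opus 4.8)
The plan is to compute $\TT_\ppp f$ directly from the definition, substitute in the $\AA$-expansion of $f$, observe that the result is again an $\AA$-expansion with the \emph{same} exponent $n$ whose coefficients are explicit linear combinations of the $c_a(f)$, and then extract $\la_\ppp$ and $c_\ppp(f)$ by comparing with $\TT_\ppp f=\la_\ppp f$ via the uniqueness statement of Theorem~\ref{uniq}. Throughout one works with $z$ sufficiently close to infinity, where all the series below converge absolutely (hence unconditionally), where the substitutions $z\mapsto\wp z$ and $z\mapsto(z+\beta)/\wp$ act term by term, and where the resulting equality is an identity of $t$-expansions, which is what we want.

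Writing $f=\sum_{a\in\AA_+}c_a(f)\,G_n(t_a)$ (the constant term vanishes since $f$ is cuspidal) and substituting into $\TT_\ppp f(z)=\wp^k f(\wp z)+\sum_{\beta\in S_\ppp}f\!\left(\tfrac{z+\beta}{\wp}\right)$, the first summand equals $\wp^k\sum_{a\in\AA_+}c_a(f)\,G_n(t_{\wp a})$ because $t_a(\wp z)=t(a\wp z)=t_{\wp a}(z)$; reindexing by $b=\wp a$ this is $\wp^k\sum_{b\in\AA_+,\,\wp\mid b}c_{b/\wp}(f)\,G_n(t_b)$. For the second summand I would split the $a$-sum according to whether $\wp\mid a$. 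If $\wp\mid a$, say $a=\wp a'$, then $\tfrac{a(z+\beta)}{\wp}=a'z+a'\beta$ and the $\AA$-periodicity of $t$ (i.e.\ $t(w+c)=t(w)$ for $c\in\AA$, immediate since $e_{\ppi\AA}$ has period lattice $\ppi\AA$) collapses $\sum_{\beta\in S_\ppp}G_n\!\left(t\!\left(\tfrac{a(z+\beta)}{\wp}\right)\right)$ to $|S_\ppp|\cdot G_n(t_{a/\wp})=q^{\degr\wp}\,G_n(t_{a/\wp})=0$, because $\CC$ has characteristic $p$. If $\wp\nmid a$, then $\beta\mapsto(a\beta\bmod\wp)$ is a bijection of $S_\ppp$, and a second application of $\AA$-periodicity together with the series definition $G_n(t(w))=\sum_{\la\in\ppi\AA}(\ppi w+\la)^{-n}$ of the Goss polynomial and the identity $\{\ppi\gamma+\wp\la:\gamma\in S_\ppp,\,\la\in\ppi\AA\}=\ppi\AA$ (the division algorithm, i.e.\ $\AA=S_\ppp\oplus\wp\AA$) yields $\sum_{\beta\in S_\ppp}G_n\!\left(t\!\left(\tfrac{a(z+\beta)}{\wp}\right)\right)=\wp^n\,G_n(t_a)$. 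Collecting everything,
\[
  \TT_\ppp f\;=\;\sum_{\substack{b\in\AA_+\\ \wp\mid b}}\wp^k\,c_{b/\wp}(f)\,G_n(t_b)\;+\;\sum_{\substack{b\in\AA_+\\ \wp\nmid b}}\wp^n\,c_b(f)\,G_n(t_b),
\]
an $\AA$-expansion with exponent $n$.

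By Theorem~\ref{uniq}, comparing this with $\TT_\ppp f=\la_\ppp f=\la_\ppp\sum_{b}c_b(f)\,G_n(t_b)$ forces $\wp^n c_b(f)=\la_\ppp c_b(f)$ for every $b$ with $\wp\nmid b$, and $\wp^k c_{b/\wp}(f)=\la_\ppp c_b(f)$ for every $b$ with $\wp\mid b$. Since $\wp^n,\wp^k\neq0$ in $\CC$, taking $\la_\ppp=0$ would force every $c_b(f)$ to vanish, contradicting $f\neq0$; hence $\la_\ppp\neq0$. Now start from any $b_0$ with $c_{b_0}(f)\neq0$: the second relation then gives $c_{b_0/\wp}(f)\neq0$ whenever $\wp\mid b_0$ (as $\la_\ppp\neq0$), so dividing by $\wp$ repeatedly — the degree strictly decreasing — reaches some $b_1$ with $\wp\nmid b_1$ and $c_{b_1}(f)\neq0$; the first relation at $b=b_1$ then gives $\la_\ppp=\wp^n$. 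Finally, the second relation at $b=\wp$ reads $\wp^k c_1(f)=\la_\ppp c_\wp(f)=\wp^n c_\wp(f)$, i.e.\ $c_\ppp(f)=c_\wp(f)=\wp^{k-n}c_1(f)$.

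The one substantive step is the evaluation of $\sum_{\beta\in S_\ppp}f((z+\beta)/\wp)$ on an $\AA$-expansion — in substance Goss's computation of the action of $\TT_\ppp$ on $t$-expansions, applied to each $G_n(t_a)$ — and within it the two identities $q^{\degr\wp}=0$ in $\CC$ (which annihilates every $a$ with $\wp\mid a$) and $\sum_{\gamma\in S_\ppp}G_n(t((az+\gamma)/\wp))=\wp^n\,G_n(t_a)$ for $\wp\nmid a$; the latter is precisely where the series description of $G_n$ and the decomposition $\AA=S_\ppp\oplus\wp\AA$ are used. Everything after that is the uniqueness theorem together with elementary manipulation of the two coefficient relations, and no prior knowledge of a relation between $n$ and $k$ is needed.
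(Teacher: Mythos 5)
Your proof is correct and follows essentially the same route as the paper: compute $\TT_\ppp$ on the $\AA$-expansion via the lattice-sum description of $G_n$, split according to whether $\wp$ divides $a$ (your use of $\AA$-periodicity plus $q^{\degr\wp}=0$ is just a cleaner phrasing of the paper's preimage-counting for the surjection $(b,\beta)\mapsto a\beta+b\wp$), and then compare coefficients using the uniqueness theorem. Your descent argument to locate a nonzero $c_{b_1}$ with $\wp\nmid b_1$ replaces the paper's contradiction argument but proves the same point, and the final relation $\wp^k c_1=\wp^n c_\wp$ is identical.
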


\begin{proof} Since $f$ and $\wp$ are fixed, we let $c_a = c_a(f)$.  We compute the Hecke action
\[
 \begin{aligned}
  \TT_\ppp f & = \wp^k \sum_{a \in \AA_+} c_a G_n (t_{\wp a}) + \sum_{\beta \in S_\ppp} \sum_{a \in \AA_+} c_a G_n \left (t_a \left (\frac{z + \beta}{\wp} \right ) \right ) \\
  & = \wp^k \sum_{a \in \AA_+} c_a G_n(t_{\wp a}) + \frac{1}{\ppi^n} \sum_{\beta \in S_\ppp} \sum_{a \in \AA_+} \sum_{b \in \AA} \frac{c_a \wp^n}{(a z + a \beta + b \wp)^n} \\
 & = \wp^k \sum_{a \in \AA_+} c_a G_n(t_{\wp a}) + \frac{1}{\ppi^n} \sum_{a \in \AA_+} \sum_{b \in \AA} c_a \wp^n \sum_{\beta \in S_\ppp} \frac{1}{(az + a \beta + b \wp)^n}. 
\end{aligned}
\]
If $(a, \wp) = 1$, then the map $\AA \times S_\ppp \to \AA$, which sends $(b, \beta)$ to $a \beta + b \wp$, is a bijection. The inner double sum is absolutely convergent, therefore by rearranging we obtain
\[
 \sum_{b \in \AA}  \sum_{\beta \in S_\ppp} \frac{1}{(az + a \beta + b \wp)^n} = \sum_{b \in \AA} \frac{1}{(az + b)^n} = G_n(t_a).
\]
If $(a, \wp) = \wp$, then the map $\AA \times S_\pp \to \AA$, which sends $(b, \beta)$ to $a \beta + b \wp$, is surjective, and every output has a number of preimages which is divisible by~$q$. Hence
\[
\sum_{b \in \AA} \sum_{\beta \in S_\ppp} \frac{1}{(az + a \beta + b \wp)^n} = 0.
\]
It follows that
\[
 \TT_\ppp f = \wp^k \sum_{a \in \AA_+} c_a G_n(t_{\wp a}) + \wp^n \sum_{a \in \AA_+, (a, \wp) = 1} c_a G_n(t_a). 
\]
Noting that $\TT_\ppp f = \la_\ppp f$ and comparing coefficients in the $\AA$-expansions, we see that if there exists $a \in \AA_+$ such that $(a, \wp) = 1$ and $c_a \neq 0$, then $\la_\ppp = \wp^n$. But if all the $c_a$ satisfying $(a, \wp) = 1$ are zero, then again looking at the $\AA$-expansions on both sides we see that $f$ cannot be an eigenform for $\TT_\ppp$. Indeed, by the computation above 
\[
           f = \sum_{a \in \AA_+} c_{\wp a} G_n(t_{\wp a}) \imp \la_\ppp \sum_{a \in \AA_+} c_{\wp a} G_n(t_{\wp a}) = \wp^k \sum_{a \in \AA_+} c_{\wp a} G_n(t_{\wp^2 a}),
\]
which contradicts the uniqueness of the $A$-expansion. By comparing $\wp^\text{th}$ coefficients on both sides, we get
\[
  c_\wp = \frac{\wp^k}{\la_\ppp} c_1 = \wp^{k-n} c_1.
\]
\end{proof}

\begin{coro} \label{acoro} Assume that $f \in \SForms$ is a modular form that possesses an $\AA$-expansion with exponent $n$. Let $a = \prod_{i = 1}^\nu \wp_i^{e_i}$ for distinct monic primes $\wp_i$. If $f$ is an eigenform for $T_{\ppp_1}, \ldots, T_{\ppp_\nu}$, then $c_a(f) = a^{k-n} c_1(f)$.
\end{coro}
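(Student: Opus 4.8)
The plan is to isolate a single ``divisibility recursion'' for the coefficients $c_b(f)$ and then iterate it over the prime factorization of $a$. The recursion is already contained in the computation carried out in the proof of Theorem~\ref{aexpeigenvalues}: for a nonzero prime $\ppp$ with monic generator $\wp$ one has
\[
 \TT_\ppp f = \wp^k \sum_{a \in \AA_+} c_a(f)\, G_n(t_{\wp a}) \;+\; \wp^n \sum_{\substack{a \in \AA_+ \\ (a,\wp) = 1}} c_a(f)\, G_n(t_a).
\]
Since $f \in \SForms$ has an $\AA$-expansion with exponent $n$ and is, by hypothesis, an eigenform for $\TT_{\ppp_i}$, Theorem~\ref{aexpeigenvalues} gives $\la_{\ppp_i} = \wp_i^n$, hence $\TT_{\ppp_i} f = \wp_i^n f$. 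For a fixed $b \in \AA_+$, comparing the coefficient of $G_n(t_{\wp_i b})$ on the two sides — legitimate by the uniqueness statement, Theorem~\ref{uniq} — the left-hand side contributes $\wp_i^k c_b(f)$ (the term $a = b$ of the first sum; the second sum is supported on indices coprime to $\wp_i$ and so contributes nothing), while the right-hand side contributes $\wp_i^n c_{\wp_i b}(f)$. Therefore
\[
 c_{\wp_i b}(f) = \wp_i^{\,k-n}\, c_b(f) \qquad \text{for every } b \in \AA_+ .
\]

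With this in hand I would simply iterate. Writing $a = \prod_{i=1}^\nu \wp_i^{e_i}$, apply the recursion for $\ppp_1$ exactly $e_1$ times to peel off all factors of $\wp_1$, obtaining $c_a(f) = \wp_1^{\,e_1(k-n)}\, c_{a/\wp_1^{e_1}}(f)$, then repeat for $\wp_2,\dots,\wp_\nu$. A straightforward induction on $\nu$ (with an inner induction on each exponent $e_i$) then yields
\[
 c_a(f) = \Big(\prod_{i=1}^\nu \wp_i^{\,e_i}\Big)^{\!k-n} c_1(f) = a^{k-n}\, c_1(f),
\]
which is the assertion.

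There is no genuine obstacle here; the only point requiring a little care is the coefficient extraction in the first step — invoking Theorem~\ref{uniq} to justify matching the two $\AA$-expansions term by term, and checking the bookkeeping that $G_n(t_{\wp_i b})$ receives a contribution only from the single index $a = b$ in the first sum and none from the second. The edge case $c_1(f) = 0$ is automatically consistent, since the recursion then forces $c_{\wp_i^{\,j}}(f) = 0$ for all $i$ and $j$, in agreement with $a^{k-n}\cdot 0$.
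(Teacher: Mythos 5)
Your proposal is correct and follows essentially the same route as the paper: the paper's proof is the one-line ``induction on the factorization of $a$,'' resting on exactly the recursion $c_{\wp b}(f) = \wp^{k-n} c_b(f)$ that you extract from the Hecke computation in the proof of Theorem~\ref{aexpeigenvalues} (the paper records the case $b=1$ explicitly at the end of that proof). Your coefficient-extraction bookkeeping and the appeal to Theorem~\ref{uniq} are the right justifications for the step the paper leaves implicit.
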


\begin{proof} This follows by induction on the factorization of $a$. \end{proof}

Classically any Hecke eigenform for $\text{SL}_2(\ZZ)$ is determined up to a multiplicative constant by its eigensystem. This is known as the \emph{multiplicity one} property (usually one speaks of the multiplicity one property of cusp forms). The analogous multiplicity one property is not true for Drinfeld modular forms for $\GL$. For instance, $g$, $g^q \delt$, $\delt$ are all eigenforms with eigensystem $\{ \la_\ppp = \wp^{q-1} \}$ (see \cite[Cor.~2.2.4, 2.2.5]{Go1}). Indeed, Theorem~\ref{mainthm} provides infinitely many counterexamples:

\begin{example} Let $n = p^r$. If $u$ is a positive integer, then the pair $(p^r (2 + u(q-1)), p^r)$ satisfies the hypothesis of Theorem~\ref{mainthm}. And therefore, the family
\[
	f_{p^r (2 + u(q-1)), p^r} = \sum_{a \in \AA_+} a^{p^r (1 + u(q-1))} t_a^{p^r} \in S_{p^r (2 + u(q-1)), p^r} (\GL), \qquad u \in \NN
\]
consists of eigenforms with eigensystem $\{ \la_\ppp = \wp^{p^r} \}$. 

The family $\{ f_{p^r (2 + u(q-1)), p^r} \}_{u \in \NN}$ consists of $p^r$-powers of the forms from the family $\{ f_s \}_{s \in \ZZ_{\geq 0}}$ defined in Definition~\ref{specialfam} below. 

Next, let $n$ be a fixed positive integer, which is not a $p^\text{th}$-power. Put $\nu = \lceil \log_p (n) \rceil$. Let $u_0$ be a positive integer, which satisfies the congruence $n \equiv u_0 (1-q) \bmod p^{\nu}$. Then, for any integer $u \geq 0$, the pair $(2n + u_0 (q-1) + p^\nu u (q-1), n)$ satisfies the hypothesis of Theorem~\ref{mainthm}. Therefore, the family
\[
	f_{2n + u_0 (q-1) + p^\nu u (q-1), n} = \sum_{a \in \AA_+} a^{n + u_0 (q-1) + p^\nu u (q-1)} G_n (t_a) , \qquad u \geq 0\]
consists of eigenforms with eigensystem $\{ \la_\ppp = \wp^n \}$. The family of eigenforms $\{ f_{2n + u_0 (q-1) + p^\nu u (q-1), n}\}_{u \in \ZZ_{\geq 0}}$ can be obtained from the family $\{ f_s \}_{s \in \ZZ_{\geq 0}}$ (Definition~\ref{specialfam} below) by using divided derivative (see \cite[p.~5]{BoPe}).
\end{example}

 Since the classical version of multiplicity one fails for Drinfeld modular forms, Gekeler asked if a Drinfeld eigenform for $\GL$ is determined up to a multiplicative constant by its eigenvalues and its weight. We do not know if the answer to Gekeler's question is positive or negative (in general) when $f$ is an eigenform, or even a cuspidal eigenform, for~$\GL$. There are multiplicity one results due to Armana \cite[Thm.~7.7]{Cec} for forms of low weight for $\GL$. The situation is much more favorable if we assume that the eigenform has an $\AA$-expansion. The following result (which is an immediate consequence of Corollary~\ref{acoro}) shows that a cuspidal Drinfeld eigenform with an $\AA$-expansion is uniquely determined by its  eigensystem $\{ \la_\ppp \}$ and its weight $k$ (as predicted by a positive answer to  Gekeler's question):

\begin{theorem}[Multiplicity One for modular forms with $A$-expansions] \label{aexpmultone} \qquad \qquad 

 If $f \in \SForms$ is an eigenform that possesses an $\AA$-expansion with exponent~$n$, then 
\[
  f = \sum_{a \in \AA_+} a^{k-n} G_n(t_a).
\]
Therefore, $f$ is determined uniquely by its weight $k$ and the eigenvalues $\{ \la_\ppp = \wp^n \}$.
\end{theorem}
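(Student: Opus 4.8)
The plan is to read this off from Corollary~\ref{acoro}, which already contains the essential point: for a cuspidal eigenform with an $\AA$-expansion, every coefficient $c_a(f)$ is forced to equal $a^{k-n}c_1(f)$. So the whole argument amounts to assembling that fact, checking that the normalizing coefficient $c_1(f)$ does not vanish, and noting that the eigensystem determines the exponent $n$.

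First I would record that, since $f \in \SForms$ is cuspidal, Definition~\ref{def: aexp} gives $c_0(f) = 0$, so that $f = \sum_{a \in \AA_+} c_a(f)\,G_n(t_a)$ in $\CC[[t]]$. Because $f$ is a simultaneous eigenform it is in particular an eigenform for $\TT_\ppp$ for every non-zero prime $\ppp$; hence for an arbitrary $a = \prod_{i=1}^{\nu} \wp_i^{e_i} \in \AA_+$ the hypotheses of Corollary~\ref{acoro} are met, and it yields $c_a(f) = a^{k-n} c_1(f)$ (the case $a = 1$ being trivial). Substituting back gives
\[
  f = c_1(f) \sum_{a \in \AA_+} a^{k-n} G_n(t_a)
\]
as an identity of power series. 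If $c_1(f)$ were $0$, then every $c_a(f)$ would vanish and $f$ would be the zero form, which is excluded for an eigenform; hence $c_1(f) \neq 0$. Rescaling $f$ by $c_1(f)^{-1}$, which changes neither its being an eigenform nor its eigensystem, produces the displayed formula $f = \sum_{a \in \AA_+} a^{k-n} G_n(t_a)$.

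For the final sentence, I would invoke Theorem~\ref{aexpeigenvalues}, according to which the form just obtained has eigensystem $\{ \la_\ppp = \wp^n \}$; since for a fixed monic prime $\wp$ the powers $\wp^n$ ($n \geq 1$) are pairwise distinct in $\AA \subset \CC$, the eigenvalues alone determine $n$. Together with the weight $k$, this pins down the right-hand side $\sum_{a \in \AA_+} a^{k-n} G_n(t_a)$, and hence $f$.

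I do not anticipate a real obstacle here: the substantive input, namely the Hecke computation on $\AA$-expansions (Theorem~\ref{aexpeigenvalues}) and the uniqueness of the $\AA$-expansion (Theorem~\ref{uniq}), has already been carried out. The only points calling for a little care are the normalization bookkeeping and the exclusion of the degenerate case $c_1(f) = 0$, both of which are immediate.
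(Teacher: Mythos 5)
Your proof is correct and follows exactly the route the paper intends: the paper offers no separate argument, stating the theorem as an immediate consequence of Corollary~\ref{acoro}, which is precisely what you carry out (apply the corollary to get $c_a(f)=a^{k-n}c_1(f)$, rule out $c_1(f)=0$, normalize, and read off $n$ from the eigensystem via Theorem~\ref{aexpeigenvalues}). The only point worth noting is that the displayed identity in the theorem implicitly assumes the normalization $c_1(f)=1$, which your rescaling step makes explicit.
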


\begin{example}[Non-examples]
Theorem~\ref{aexpmultone} shows that eigenforms with $\AA$-expansions can only have very restrictive types of eigensystems. In particular, not every Drinfeld eigenform can have an $\AA$-expansion. For instance, the form $h^2 g \in S_{10, 0} (\GL)$ is an eigenform when $q = 3$ (since $S_{10, 0} (\GL)$ is one-dimensional), but computations for $\ppp$ of degree $\leq 4$ show that $\la_\ppp \neq \wp^n$ for any $n$.

The natural question is: \emph{Does every cuspidal eigenform with eigensystem $\{ \la_\ppp = \wp^n \}$ possess an $\AA$-expansion?}  We strongly suspect that the answer is \emph{No}. The example that we have in mind is $h^2 g^2 \in S_{12, 0}^2 (\GL)$ when $q = 3$ (this is an eigenform, since $S_{12, 0}^2 (\GL)$ is one-dimensional). The only reason that we cannot be completely certain is that we cannot show that the eigenform $h^2 g^2$ has eigenvalues $\la_\ppp = \wp^4$ for all $\ppp$. We have verified that for $\ppp$ of degree $\leq 4$, $\TT_\ppp h^2 g^2 = \wp^4 h^2 g^2$, and $h^2 g^2$ does not have an $\AA$-expansion.
\end{example}

\section{Consequences of Theorem \ref{mainthm}}

\subsection{Single-Cuspidal Forms that are not Double-Cuspidal} Theorem~\ref{mainthm} allows us to define a special family of Drinfeld modular forms that turns out to parametrize the space of strictly single-cuspidal Drinfeld modular forms for $\GL$.

\begin{definition} \label{specialfam} If $s \in \ZZ_{\geq 0}$, then we define
\[
  f_s := f_{q + 1 + s(q-1), 1} = \sum_{a \in \AA_+} a^{q + s(q-1)} t_a.
\]
\end{definition}

It follows from Theorem~\ref{mainthm} that $f_s$  is an element of the space of cuspidal forms $S_{q + 1+s(q-1), 1} (\GL)$, which is not in $S_{q +1 +s(q-1), 1}^2 (\GL)$.

 Since the weights $k = q + 1 + s(q-1), s \geq 0$, are precisely the weights for which $S_{k, 1} (\GL) / S_{k, 1}^2 (\GL) \neq 0$, this shows that:
 
 \begin{theorem} \label{cuspmoddcuspGL} The space 
\[
    S_{k, 1} (\GL) / S_{k, 1}^2(\GL)
\]
is diagonalizable, and the eigenforms for $S_{k, 1} (\GL)$ whose images form a coset eigenbasis have eigenvalues $\la_\ppp = \wp$. 
\end{theorem}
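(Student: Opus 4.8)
The plan is to recognize $S_{k,1}(\GL)/S_{k,1}^2(\GL)$ as a Hecke module of dimension at most one and then to read off its eigenvalues from the explicit form $f_s$. First I would recall that $\GL$ has a single cusp, located at $t=0$, so that a cusp form $f=\sum_{n\ge 1}a_nt^n$ is double-cuspidal if and only if $a_1=0$; hence the coefficient map $f\mapsto a_1(f)$ on $S_{k,1}(\GL)$ has kernel exactly $S_{k,1}^2(\GL)$, and $\dim_\CC S_{k,1}(\GL)/S_{k,1}^2(\GL)\le 1$, with equality precisely when some weight-$k$, type-$1$ cusp form has non-zero first coefficient. By the characterization of weights quoted just above the theorem, this occurs exactly for $k=q+1+s(q-1)$ with $s\ge 0$; for every other $k$ the quotient vanishes and the assertion is empty, so I would assume $k=q+1+s(q-1)$ henceforth.

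Next I would produce the basis. By Theorem~\ref{mainthm}, applied to $f_s$ from Definition~\ref{specialfam}, the form $f_s=\sum_{a\in\AA_+}a^{q+s(q-1)}t_a$ belongs to $S_{k,1}(\GL)$. Since $t_1=t$ whereas $t_a$ for $\deg a\ge 1$ starts in degree $q^{\deg a}\ge q\ge 2$, the first coefficient of $f_s$ equals $1^{q+s(q-1)}=1$; in particular $f_s\notin S_{k,1}^2(\GL)$, so the class $\overline{f_s}$ is a basis of the one-dimensional space $S_{k,1}(\GL)/S_{k,1}^2(\GL)$.

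Then I would bring in the Hecke operators. Each $\TT_\ppp$ preserves $S_{k,1}^2(\GL)$ --- this is immediate from Goss's formula for the action of $\TT_\ppp$ on $t$-expansions, since $X^2\mid G_{n,\ppp}(X)$ for $n\ge 2$ and $t_\wp^{\,n}\in t^2\CC[[t]]$ for $n\ge 2$, and it is in any case part of the B\"{o}ckle--Pink circle of results --- so $\TT_\ppp$ descends to an endomorphism of the quotient. A linear endomorphism of a space of dimension $\le 1$ is a scalar, so the operators $\TT_\ppp$ act on $S_{k,1}(\GL)/S_{k,1}^2(\GL)$ by scalars $\mu_\ppp$ and are therefore trivially simultaneously diagonalized; this gives the diagonalizability. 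To compute $\mu_\ppp$ I would apply $\TT_\ppp$ to the lift $f_s$: the reindexing $b=\wp a$ in the term $\wp^k\sum_{a\in\AA_+}a^{k-1}G_1(t_{\wp a})$, carried out exactly as in the proof of Theorem~\ref{aexpeigenvalues} (with $n=1$), yields $\TT_\ppp f_s=\wp\,f_s$ with no prior assumption that $f_s$ is an eigenform. Hence $\mu_\ppp=\wp$ and $f_s$ is itself an eigenform in $S_{k,1}(\GL)$ with eigensystem $\{\la_\ppp=\wp\}$.

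Finally I would record the consequence for coset eigenbases. The class $\overline{f_s}$ of the eigenform $f_s$ is such a basis, with eigenvalues $\wp$. Conversely, if $f\in S_{k,1}(\GL)$ is any eigenform, with $\TT_\ppp f=\la_\ppp f$, whose image $\overline f$ is a basis of the quotient (so $\overline f\ne 0$), then $\la_\ppp\overline f=\TT_\ppp\overline f=\mu_\ppp\overline f=\wp\,\overline f$ forces $\la_\ppp=\wp$ for all $\ppp$. I do not expect a genuine obstacle here: all the real content is already packaged into Theorem~\ref{mainthm} (that $f_s$ is modular, cuspidal, and not double-cuspidal) and into the quoted weight count; the only points demanding a little care are the Hecke-stability of $S_{k,1}^2(\GL)$ and the determination of $\dim_\CC S_{k,1}(\GL)/S_{k,1}^2(\GL)$ for all $k$, both of which are standard.
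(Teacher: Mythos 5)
Your proposal is correct and follows essentially the same route as the paper, which derives the theorem directly from the observation that $f_s$ spans the (at most one-dimensional) quotient and is an eigenform with $\la_\ppp = \wp$ by the computation in Theorem~\ref{aexpeigenvalues}. You merely make explicit the details the paper leaves implicit (the dimension count via the $a_1$-coefficient, Hecke stability of $S_{k,1}^2(\GL)$, and the scalar action on a one-dimensional space), all of which are sound.
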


 In \cite[Ex.~15.7]{Boe1}, B\"{o}ckle shows\footnote{The reader should be aware that B\"{o}ckle uses a different normalization for $\TT_\ppp$ and with his normalization the eigenvalues are all equal to $1$, which corresponds to $\la_\ppp = \wp$ in our notation.} that the same result holds for $\Gamma_1(T)$. We will see below that we can reprove B\"{o}ckle's result by using $\AA$-expansions and extend it to $\Gamma_0(T)$ as hinted by \cite[Rem.~12.9 \& Ex.~15.7]{Boe1}. 

To that end, let $\Ga$ be either $\Gamma_1(T)$ or $\Gamma_0(T)$. We have two natural maps from $M_{k, m}(\GL)$ to $M_{k, m}(\Ga)$, which respect cuspidality and double-cuspidality:
\[ 
  \begin{aligned}
  &  \iota: M_{k, m}(\GL) \to M_{k, m}(\Ga) : f(z) \mapsto f(z), \\
  & \iota_T : M_{k, m} (\GL) \to M_{k, m}(\Ga): f(z) \mapsto F(z) = f(Tz).
  \end{aligned}
\]
The effect of $\iota_T$ on $\AA$-expansions is as follows:
\[
   \iota_T \left (c_0 + \sum_{a \in \AA_+} c_a G_n (t_a) \right) = c_0 + \sum_{a \in \AA_+} c_a G_n (t_{a T}).
\]
The proof of Theorem~\ref{aexpeigenvalues} shows that $\iota_T f_{k, n}$ remains an eigenform away from the level. That is, if $\ppp \neq (T)$, then
\[
	\TT_\ppp \iota_T f_{k, n} = \ppp^n \iota_T f_{k, n}.
\]


\begin{example} \label{ex18}
Example 15.7 in\footnote{These are examples due to B\"{o}ckle and Pink.}  \cite{Boe1} shows that the two-dimensional quotient space $S_{k, 0} (\Gamma_1(T)) / S_{k, 0}^2 (\Gamma_1(T))$, is always diagonalizable with respect to the Hecke algebra away from $T$. And any eigenform in this space has eigenvalues\footnote{With B\"{o}ckle's normalization the eigenvalues are actually $\la_\ppp = 1$.} $\la_\ppp = \wp$ for $\ppp \neq T$. 

The use of $\AA$-expansions allows us to see that this also follows without using the cohomological tools developed in \cite{BP}. Indeed, if $k \equiv 1 \bmod (q-1)$, then write $k = s (q-1) + 1$.  Using $\iota$ and $\iota_T$ to induce $f_s$ to $\Gamma_1(T)$, we have two linearly independent forms 
\[
\iota(f_s), \iota_T(f_s) \in S_{k, 0} (\Gamma_1(T)) / S_{k, 0}^2 (\Gamma_1(T)),
\] with the same eigensystem away from the level $\{ \la_\ppp = \wp \}_{\ppp \neq (T)}$. 
\end{example}

The argument applies verbatim to $\Gamma_0(T)$, and we obtain:
\begin{theorem} \label{missingtheorem}
 The quotient space $S_{k, 1} (\Gamma_0(T)) / S_{k, 1}^2 (\Gamma_0(T))$ has a basis of eigenforms away from $T$. Each element of this basis has eigensystem $\{ \la_\ppp = \wp \}_{\ppp \neq (T)}$. \qed
\end{theorem}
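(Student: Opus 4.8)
The plan is to mirror the treatment of $\Gamma_1(T)$ from Example~\ref{ex18} verbatim, using the two natural degeneracy maps $\iota$ and $\iota_T$ applied to the forms $f_s$ of Definition~\ref{specialfam}. First I would fix $k$ with $k \equiv q+1 \bmod (q-1)$, i.e. $k = q+1+s(q-1)$ for some $s \geq 0$; these are exactly the weights for which the quotient $S_{k,1}(\Gamma_0(T))/S_{k,1}^2(\Gamma_0(T))$ is nonzero, by the same dimension bookkeeping that underlies Theorem~\ref{cuspmoddcuspGL}. By Theorem~\ref{mainthm}, $f_s = f_{q+1+s(q-1),1} \in S_{k,1}(\GL)$, and since the level structure of $\Gamma_0(T)$ is weaker than that of $\Gamma_1(T)$, both $\iota(f_s)$ and $\iota_T(f_s)$ land in $S_{k,1}(\Gamma_0(T))$, respecting cuspidality (and non-double-cuspidality at the cusps).

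Next I would check linear independence of the images of $\iota(f_s)$ and $\iota_T(f_s)$ in the quotient. On $\AA$-expansions, $\iota$ is the identity and $\iota_T$ sends $\sum_{a \in \AA_+} c_a G_1(t_a)$ to $\sum_{a \in \AA_+} c_a G_1(t_{aT})$; since $G_1(X) = X$, we are comparing $\sum_{a} a^q t_a$ with $\sum_a a^q t_{aT}$. A relation $\alpha \iota(f_s) + \beta \iota_T(f_s) \in S_{k,1}^2(\Gamma_0(T))$ would, after expanding in $t$, force the coefficients to match a double-cuspidal form; comparing the lowest-order terms (the $a = 1$ contribution $t$ appears in $\iota(f_s)$ but not in $\iota_T(f_s)$, whose expansion starts at $t_T = t^{q^{\deg T}} + \cdots = t^q + \cdots$) shows $\alpha = 0$, and then $\beta = 0$ since $f_s \neq 0$ and is not double-cuspidal. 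Thus the two images span a two-dimensional subspace of the two-dimensional quotient, hence a basis.

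Then I would verify the eigenform property away from $T$. By Theorem~\ref{aexpeigenvalues} (and Corollary~\ref{acoro}), $f_s$ is an eigenform with $\la_\ppp = \wp^n = \wp$ for all $\ppp$; in particular $\iota(f_s)$ inherits $\TT_\ppp \iota(f_s) = \wp \, \iota(f_s)$ for $\ppp \neq (T)$. For $\iota_T(f_s)$, the remark preceding Example~\ref{ex18} already records that the computation in the proof of Theorem~\ref{aexpeigenvalues} goes through to give $\TT_\ppp \iota_T f_{k,n} = \wp^n \iota_T f_{k,n}$ for $\ppp \neq (T)$, so $\TT_\ppp \iota_T(f_s) = \wp \, \iota_T(f_s)$ away from the level as well. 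Hence both basis elements are eigenforms away from $T$ with the single eigensystem $\{\la_\ppp = \wp\}_{\ppp \neq (T)}$.

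The only genuine obstacle is confirming that the quotient $S_{k,1}(\Gamma_0(T))/S_{k,1}^2(\Gamma_0(T))$ is exactly two-dimensional (so that the two constructed eigenforms actually form a basis, not merely a linearly independent set), and that $\iota(f_s), \iota_T(f_s)$ are not already double-cuspidal at $\Gamma_0(T)$ — this requires knowing the cusp structure of $\Gamma_0(T)$ (two cusps, as for $\Gamma_1(T)$) and the behavior of the degeneracy maps at each cusp, which is exactly the input flagged by \cite[Rem.~12.9 \& Ex.~15.7]{Boe1}. Granting that, the argument is ``verbatim'' as the text asserts, and everything else is the bookkeeping of $\AA$-expansions already established above.
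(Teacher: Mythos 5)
Your proposal is correct and follows essentially the same route as the paper, which simply transports the $\Gamma_1(T)$ argument of Example~\ref{ex18} (inducing $f_s$ via $\iota$ and $\iota_T$, checking linear independence from the leading $t$-coefficients, and invoking the eigenvalue computation of Theorem~\ref{aexpeigenvalues} away from the level) verbatim to $\Gamma_0(T)$. The one input you correctly flag --- the two-dimensionality of the quotient, resting on the cusp structure of $\Gamma_0(T)$ --- is exactly the external ingredient the paper also takes from \cite[Rem.~12.9 \& Ex.~15.7]{Boe1}.
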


B\"{o}ckle observed in Remark~12.9 and Example~15.7 of \cite{Boe1} that the quotient space may be generated by Poincar\'{e} series. Our results show that this space is generated by forms with $\AA$-expansions for $\Ga = \GL, \Gamma_0(T)$ and $ \Gamma_1(T)$, i.e., that forms with $\AA$-expansions parametrize the quotient spaces for these congruence subgroups. We do not know if this happens for general congruence subgroups.

\subsection{The Family $F_\nu$} 

\begin{definition}
Given $\nu \in \NN$ we define
\[
  F_\nu := \sum_{a \in \AA_+} a^{q^\nu} t_a.
\]
\end{definition}

Since $(q-1) \mid (q^\nu - 1)$ it follows from Theorem~\ref{mainthm} that  \[ \{ F_\nu \in S_{q^\nu + 1, 1}(\GL) \}_{\nu \in \NN} \subset \{f_s \}_{s \in \ZZ_{\geq 0}}. \] The family $\{F_\nu\}_{\nu \in \NN}$ satisfies a recursive formula, which is similar to the formula for the subfamily $\{g_{q^k - 1}\}_{k \in \NN}$ of Eisenstein series given in \cite[Prop.~6.9]{Gek}.

\begin{theorem}
 We have $F_1 = h$, $F_2 = h g^q$ and the recursive formula for $\nu \geq 2$
\[
 F_\nu = \frac{g^q}{h^{q-1}} F_{\nu-1}^q - \frac{[\nu -2]^{q^2}}{h^{q-1}} F_{\nu -2}^{q^2},
\]
where $[i] := T^{q^i} - T$.
\end{theorem}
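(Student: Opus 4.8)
The plan is to prove the three assertions in order, starting with the base cases and then establishing the recursion by combining the multiplicity-one theorem with Goss-polynomial identities. First I would verify $F_1 = h$: by the definition $F_1 = \sum_{a \in \AA_+} a^q t_a$, and this is exactly L\'opez's $\AA$-expansion for $h$ recalled in the excerpt, so there is nothing to do. For $F_2 = h g^q$ I would note that $F_2 = \sum_{a\in\AA_+} a^{q^2} t_a \in S_{q^2+1,1}(\GL)$ by Theorem~\ref{mainthm} (with $k = q^2+1$, $n=1$, since $k - 2n = q^2 - 1$ is a positive multiple of $q-1$ and $n=1 \le p^{\val_p(q^2-1)}$ trivially), and it is an eigenform with eigensystem $\{\la_\ppp = \wp\}$. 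On the other hand $h g^q$ has weight $q+1 + q(q-1) = q^2+1$ and type $1$, and since $g$ has eigensystem $\{\wp^{q-1}\}$, $g^q$ has eigensystem $\{\wp^{q(q-1)} = \wp^{q^2-q}\}$ (Frobenius twist on eigenvalues), while $h$ has eigensystem $\{\wp^q\}$; hence $hg^q$ is an eigenform with eigensystem $\{\wp^{q^2-q}\cdot\wp^q\} = \{\wp^{q^2}\}$. That does \emph{not} match $\{\wp\}$, so I cannot conclude $F_2 = hg^q$ from multiplicity one directly — instead I would verify $F_2 = hg^q$ by a direct $t$-expansion comparison, or more cleanly by checking it lies in the one-dimensional space $S_{q^2+1,1}(\GL)$ (which holds since $hg^q$ spans it) and matching the leading coefficient, both being $1$.

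Wait — I should be more careful about the weights in the recursion itself, which is the real content. The plan is: both sides of the claimed identity
\[
 F_\nu = \frac{g^q}{h^{q-1}} F_{\nu-1}^q - \frac{[\nu -2]^{q^2}}{h^{q-1}} F_{\nu -2}^{q^2}
\]
should be Drinfeld modular forms of the same weight and type (weight $q^\nu+1$, type $1$; note $h$ has weight $q+1$ and type $1$, $g$ has weight $q-1$ and type $0$, so $g^q/h^{q-1}$ has weight $q(q-1) - (q-1)(q+1) = -(q-1)$ — so one must check this quotient times $F_{\nu-1}^q$, of weight $q(q^{\nu-1}+1) = q^\nu + q$, gives net weight $q^\nu+1$; similarly for the other term). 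Since $S_{q^\nu+1,1}(\GL)$ need not be one-dimensional, I would not argue by dimension but rather prove the identity as an identity in $\CC[[t]]$. The key tool is the formula $t_{ab} = (t_a)_b$ applied carefully, together with the behaviour of the $\AA$-expansion coefficients under the Carlitz module: $F_\nu = \sum_a a^{q^\nu} t_a$, and I would expand $t_a$ recursively using $t_{Ta}$, or better, use the recursion for Eisenstein series $\{g_{q^k-1}\}$ from \cite[Prop.~6.9]{Gek} as a template and adapt the same manipulation of Goss polynomials and the equation $e_{\ppi\AA}(\ppi T z) = \rho_T(e_{\ppi\AA}(\ppi z))$, i.e.\ $1/t_T = \rho_T(1/t) = T/t + (1/t)^q$, hence $t_T = t^q/(T t^{q-1} + 1)$.

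The main obstacle, I expect, is bookkeeping the coefficient $[\nu-2]^{q^2}$ and the Frobenius-twisted powers correctly. The cleanest route is probably: establish that $h^{q-1} F_\nu$, $g^q F_{\nu-1}^q$, and $[\nu-2]^{q^2} F_{\nu-2}^{q^2}$ are all honest elements of $M_{q^\nu+q-1,\,1}(\GL)$ (absorbing the $h^{q-1}$ denominator), then verify the linear relation
\[
 h^{q-1} F_\nu = g^q F_{\nu-1}^q - [\nu-2]^{q^2} F_{\nu-2}^{q^2}
\]
by a finite $t$-expansion computation up to the order needed. Alternatively — and this is the slicker approach I would try first — one can use the structural fact that $F_\nu$, $h$, $g$ are all expressible via the Anderson-Thakur / Carlitz framework, or simply observe that $F_{\nu-1}^q$ and $F_\nu$ are related by applying the $q$-power Frobenius to the base field and comparing coefficients: since $a \mapsto a^q$ on $\AA_+$ is a bijection onto the set of $q$-th powers, $F_\nu = \sum_a a^{q^\nu}t_a$ has coefficients $c_a(F_\nu) = a^{q^\nu}$, and Theorem~\ref{aexpeigenvalues}/Corollary~\ref{acoro} pin these down once we know $F_\nu$ is cuspidal of the right weight. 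So the crux reduces to showing the right-hand side, call it $R_\nu$, is cuspidal, modular of weight $q^\nu+1$ and type $1$, \emph{and} has an $\AA$-expansion with exponent $1$ and first coefficient $c_1(R_\nu)=1$; then multiplicity one (Theorem~\ref{aexpmultone}) forces $R_\nu = f_s = F_\nu$ with $s = (q^\nu - q)/(q-1)$. Computing $c_1(R_\nu)$ and checking $R_\nu$ has \emph{any} $\AA$-expansion at all — that it is not merely modular but of the special shape — will be the delicate point, and I would handle it by induction, using the inductive $\AA$-expansions of $F_{\nu-1}$ and $F_{\nu-2}$ together with the identity $G_1(t_a)^{q^j} = G_1(t_a^{q^j})$ and the multiplicativity $t_a^{q^j}$-manipulations, exactly mirroring Gekeler's proof of \cite[Prop.~6.9]{Gek}.
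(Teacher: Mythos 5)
Your base cases are fine ($F_1=h$ is L\'opez's formula, and $F_2=hg^q$ follows since $S_{q^2+1,1}(\GL)$ is one-dimensional and both sides have leading coefficient $1$), but the recursion itself --- the actual content of the theorem --- is never proved in your proposal; every route you sketch defers the decisive step. Route (a), ``verify $h^{q-1}F_\nu=g^qF_{\nu-1}^q-[\nu-2]^{q^2}F_{\nu-2}^{q^2}$ by a finite $t$-expansion computation,'' is a finite check only for each \emph{fixed} $\nu$: the number of coefficients needed grows like $q^{\nu-1}$ with the weight $q^\nu+q^2$ (not $q^\nu+q-1$ as you wrote), so this is an infinite family of computations, not a proof. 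Route (b), invoking multiplicity one (Theorem~\ref{aexpmultone}) for the right-hand side $R_\nu$, requires knowing both that $R_\nu$ is a simultaneous \emph{eigenform} and that it has an $\AA$-expansion; neither is established (quotients and products of eigenforms need not be eigenforms --- indeed your own aside illustrates this, since eigenvalues do not multiply: $hg^q=F_2$ has eigensystem $\{\wp\}$, not $\{\wp^{q^2}\}$), and ``checking $R_\nu$ has any $\AA$-expansion at all'' is essentially equivalent to the theorem you are trying to prove. Finally, ``mirroring Gekeler's proof of \cite[Prop.~6.9]{Gek}'' does not obviously transfer: that recursion for $\{g_{q^k-1}\}$ comes from the coefficient identities of the universal Drinfeld module ($\rho_T\rho_a=\rho_a\rho_T$), and the $F_\nu$ are not Eisenstein series, so nothing in your plan explains where the specific coefficients $g^q$ and $-[\nu-2]^{q^2}$ come from.

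For comparison, the paper's proof derives these coefficients from an external input you do not invoke: Pellarin's deformation $\mathbb{E}(z,u)=\sum_{a\in\AA_+}a(u)t_a$ satisfies the $\anvarphi$-difference equation $\anvarphi^2\mathbb{E}=(u-T^{q^2})^{-1}(-h^{q-1}\mathbb{E}+g^q\anvarphi\mathbb{E})$, and since $(\varphi\circ\anvarphi^{-1})^\nu\mathbb{E}\mid_{u=T}=F_\nu$, applying $(\varphi\circ\anvarphi^{-1})^\nu$ and specializing $u=T$ yields the recursion in one line. If you want a proof independent of Pellarin, you must supply a substitute for that difference equation; as written, your proposal reduces the theorem to an unproven claim of the same difficulty.
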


\begin{proof}
 Following Pellarin\footnote{Pellarin considers $\mathbb{E}$ in \cite[Sec.~3]{Pel1}, but the formula that we have used to define $\mathbb{E}$ is shown in \cite[Cor.~5]{Pel2}.}, we define
\[
 \mathbb{E}(z, u) = \sum_{a \in \AA_+} a(u) t_a \in \CC[[t, u]],
\]
where $u$ is a new variable independent of $t$ and $T$.
 Let $\anvarphi$ be the map that fixes $u$ and acts on the elements of $\CC[[t]]$ by $x \to x^q$ (the partial Frobenius). The space $\CC [[t, u]]$ also has the usual Frobenius, $\varphi$, which acts as $x \to x^q$ on every element of $\CC[[t, u]]$. By definition of $\varphi$ and $\anvarphi$, we have
\[
  (\varphi \circ \anvarphi^{-1})^\nu \mathbb{E}(z, u)_{\mid_{u = T}} = F_\nu.
\]
Pellarin has shown (see \cite[Prop.~9]{Pel1}) that $\mathbb{E}$ satisfies the $\anvarphi$-difference equation 
\[
 \anvarphi^2 \mathbb{E} = \frac{1}{u - T^{q^2}} (-h^{q-1} \mathbb{E} + g^q \anvarphi \mathbb{E}),
\]
which we rewrite as
\[
 \mathbb{E} = \frac{g^q}{h^{q-1}} \anvarphi \mathbb{E} - \frac{(u-T^{q^2})}{h^{q-1}} \anvarphi^2 \mathbb{E}.
\]
Applying $(\varphi \circ \anvarphi^{-1})^\nu$ to both sides and plugging in $u = T$, we get the recursion
\[
  F_\nu = \frac{g^q}{h^{q-1}} F_{\nu-1}^q - \frac{T^{q^\nu} - T^{q^2}}{h^{q-1}} F_{\nu-2}^{q^2}.
\]
\end{proof}

\begin{rmk} Our computations suggest that the following equality holds for~$j \leq q$:
\[
  \mathbb{E}^j = \sum_{a \in \AA_+} a(u)^{j} t_a^j.
\]
If we assume this conjectural equality between $\mathbb{E}^j$ and the expression on the right, then the $\anvarphi$-difference equation for $\mathbb{E}^j$ will allow us to prove the recursive relations (which we have also observed computationally) among the Drinfeld modular forms 
\[
\Phi_{\nu, j} := \sum_{a \in \AA_+} a^{j q^\nu} t_a^j
\] for different $\nu's$, where $j \leq q$. We hope to return to this in future work.
\end{rmk}

\begin{example} Using the recursion, one easily computes:
\[
 \begin{aligned}
  F_3 = \sum_{a \in \AA_+} a^{q^3} t_a & =  h g^{q^2 + q}   - [1]^{q^2} h^{q (q-1) + 1}, \\
  F_4 = \sum_{a \in \AA_+} a^{q^4} t_a & = h g^{q^3 + q^2 + q} - [2]^{q^2} h^{q (q-1)+ 1} g^{q^3} - [1]^{q^3} h^{q^2 (q-1) +1} g^q, \\
  F_5 = \sum_{a \in \AA_+} a^{q^5} t_a & =   h g^{q^4 + q^3 + q^2 + q}   - [3]^{q^2} h^{q(q-1) + 1} g^{q^4 + q^3}\\ 
& \  \ \ - [2]^{q^3} h^{q^2(q-1) + 1} g^{q^4 + q}  - [1]^{q^4} h^{q^3(q-1) + 1} g^{q^2 + q} \\
 & \ \ \ \ + [1]^{q^4}  [3]^{q^2} h^{(q^3 + q)(q-1) + 1}. \\
 \end{aligned}
\]
\end{example}

\subsection{Congruences Between Eigenforms} \label{congruences}
Another classically important topic is that of congruences between modular forms. Several results have appeared that seem to mirror the classical situation (see \cite[Sec.~12]{Gek} and \cite{Vincent}). It turns out that we can use Theorem~\ref{mainthm} to obtain a new result regarding congruences between Drinfeld eigenforms, because the $\AA$-expansions make it possible to easily observe congruences. 

\begin{definition}
Let $k, n$ be two positive integers that satisfy the hypothesis of Theorem~$\ref{mainthm}$. For any integer $l \geq 0$, define
\[
	F_{k, n, l} := f_{(k-n)q^l + n, n} =  \sum_{a \in \AA_+} a^{(k-n) q^{l}} G_n (t_a) \in S_{(k-n) q^l + n, n}(\GL).
\]
\end{definition}

\begin{theorem} \label{aexpcongruence}
Let $\nu_0 = \valu_p(k-n)$, and let $\nu$ be any non-negative integer.  

If $\ppp$ is any prime of degree $d$ with $d > \log_q(n)$, then
\[
 F_{k, n, d + \nu} \equiv F_{k, n, \nu} \bmod \ppp^{q^\nu p^{\nu_0}}.
\]
\end{theorem}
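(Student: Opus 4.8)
The plan is to work directly with the $\AA$-expansions of the forms $F_{k,n,l}$ and prove the congruence coefficient-by-coefficient. Both sides have an $\AA$-expansion with exponent $n$ and coefficients $c_a(F_{k,n,l}) = a^{(k-n)q^l}$; since the $\AA$-expansion with a fixed exponent is unique (Theorem~\ref{uniq}), a congruence between the two forms modulo $\ppp^{q^\nu p^{\nu_0}}$ will follow if I can show, for every $a \in \AA_+$,
\[
  a^{(k-n)q^{d+\nu}} \equiv a^{(k-n)q^\nu} \bmod \ppp^{q^\nu p^{\nu_0}}.
\]
Actually I need to be a bit careful: a congruence between two power series in $t$ should be read off from the congruence between the $\AA$-coefficients, but since $G_n(t_a)$ has $t$-adic valuation growing with $|a|$, and the coefficients $c_a$ live in $\AA$, reducing the $\AA$-coefficients mod $\ppp^{M}$ and observing that each $G_n(t_a)$ contributes only finitely many $t$-coefficients in any bounded range gives the claimed congruence of $t$-expansions. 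So the heart of the matter is the displayed elementary congruence in $\AA/\ppp^M$.

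To prove that, first I would split into cases according to whether $\ppp \mid a$ or not. If $\ppp \mid a$, then because $d > \log_q(n)$ we have $q^d > n$, hence $q^{d+\nu} \geq q^\nu q^d > q^\nu n \geq q^\nu$, and in fact $(k-n)q^{d+\nu}$ and $(k-n)q^\nu$ are both at least $q^\nu p^{\nu_0}$ (using $k - n$ divisible by $p^{\nu_0}$ and $k - 2n > 0$ so $k - n > n \geq 1$), so both $a^{(k-n)q^{d+\nu}}$ and $a^{(k-n)q^\nu}$ are $\equiv 0 \bmod \ppp^{q^\nu p^{\nu_0}}$ and we are done. If $\ppp \nmid a$, then $a$ is a unit in the finite local ring $\AA/\ppp^M$ with $M = q^\nu p^{\nu_0}$; the unit group has order $|\ppp|^{M-1}(|\ppp|-1) = q^{d(M-1)}(q^d-1)$, so it suffices to show that $(k-n)q^{d+\nu} \equiv (k-n)q^\nu$ modulo this order, i.e. that $(k-n)q^\nu(q^d - 1)$ is divisible by $q^{d(M-1)}(q^d - 1)$. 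Cancelling the factor $q^d - 1$, this reduces to showing $q^{d(M-1)} \mid (k-n)q^\nu$. Now $(k-n)$ is divisible by $p^{\nu_0}$; writing $q = p^e$, the $p$-adic valuation of $(k-n)q^\nu$ is at least $\nu_0 + e\nu$, while $q^{d(M-1)} = p^{ed(M-1)}$ requires valuation $ed(M-1)$, and here $M - 1 = q^\nu p^{\nu_0} - 1$, so $ed(M-1)$ is much larger than $\nu_0 + e\nu$ in general — so this crude bound is not enough and the naive "reduce mod the order of the unit group" approach is too lossy.

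The fix, which I expect to be the main technical point, is to exploit the structure of the $p$-group part of $(\AA/\ppp^M)^\times$ more carefully rather than using the full group order. The relevant fact is a lifting-the-exponent style estimate: for a unit $a \in \AA/\ppp^M$, if $a^{q^{d}} \equiv a \cdot(1 + \ppp^j(\ldots))$ with the error controlled, then raising to $q$-th powers increases the $\ppp$-adic precision of the difference $a^{q^d} - a$ in a predictable way, because $(1 + \ppp^j x)^q = 1 + \ppp^{j+1}(\ldots)$ in characteristic $p$ dividing $q$ once $j \geq 1$ (indeed $(1+y)^q = 1 + y^q$ exactly, so the $\ppp$-valuation of $y$ multiplies by $q$). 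Concretely: since $\AA/\ppp$ is the field $\Fq[T]/\ppp$ with $q^d$ elements, we have $a^{q^d} \equiv a \bmod \ppp$ for every $a$, so $a^{q^d} - a \in \ppp$, and then $a^{q^{d}\cdot q^l} - a^{q^l} = (a^{q^d})^{q^l} - a^{q^l}$ has $\ppp$-valuation at least... — here I would set $v = \valu_\ppp(a^{q^d}-a) \geq 1$ and use that $a^{q^{d+r}} - a^{q^r} = (a^{q^d} - a)^{q^r} \cdot (\text{unit adjustments})$, more precisely that in characteristic $p$, $x^{q^r} - y^{q^r} = (x-y)^{q^r}$, so $\valu_\ppp(a^{q^{d+r}} - a^{q^r}) = q^r \valu_\ppp(a^{q^d} - a) \geq q^r$. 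Hence with $r = \nu$ and then accounting for the outer exponent $(k-n)$: since $k - n = p^{\nu_0} m$ with $\gcd(m,p) = 1$, we get $a^{(k-n)q^{d+\nu}} - a^{(k-n)q^\nu} = (a^{q^{d+\nu} \cdot p^{\nu_0}})^{m} - (a^{q^{\nu}\cdot p^{\nu_0}})^m$, and $x^m - y^m$ is divisible by $x - y$, while $\valu_\ppp\big(a^{q^{d+\nu}p^{\nu_0}} - a^{q^\nu p^{\nu_0}}\big) = p^{\nu_0}\valu_\ppp\big(a^{q^{d+\nu}} - a^{q^\nu}\big) = p^{\nu_0} q^\nu \valu_\ppp(a^{q^d}-a) \geq p^{\nu_0} q^\nu$. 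This gives exactly $\valu_\ppp\big(a^{(k-n)q^{d+\nu}} - a^{(k-n)q^\nu}\big) \geq q^\nu p^{\nu_0}$, which is the desired congruence modulo $\ppp^{q^\nu p^{\nu_0}}$. The condition $d > \log_q(n)$ is what guarantees $q^d > n$, ensuring (in the $\ppp \mid a$ case) that the exponents are large enough, and more importantly it is presumably needed to ensure $F_{k,n,d+\nu}$ and $F_{k,n,\nu}$ genuinely lie in the stated spaces (the hypothesis of Theorem~\ref{mainthm} must be checked for the pair $((k-n)q^l + n, n)$, which requires $n \le p^{\valu_p((k-n)q^l)} = p^{\nu_0 + e l}$, automatic for $l \ge 1$ once $n \le p^{\nu_0}$, i.e. once the original pair $(k,n)$ satisfies the hypothesis). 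The one remaining subtlety to nail down is passing from the $\AA$-coefficient congruences to an honest congruence of $t$-expansions with the stated modulus, but this is routine given that $G_n(t_a) \in \AA[[t]]$ has integral coefficients and $t$-adic valuation $\ge |a|$.
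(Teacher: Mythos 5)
Your treatment of the coefficient congruence is correct, and it supplies the detail the paper leaves implicit: in characteristic $p$ one has $x^{q^\nu}-y^{q^\nu}=(x-y)^{q^\nu}$, so writing $k-n=p^{\nu_0}m$ with $p\nmid m$,
\[
 a^{(k-n)q^{d+\nu}} - a^{(k-n)q^{\nu}} \;=\; \Bigl( \bigl(a^{mq^{d}}\bigr)^{p^{\nu_0}} - \bigl(a^{m}\bigr)^{p^{\nu_0}} \Bigr)^{q^\nu} \;=\; \bigl( a^{mq^d} - a^m \bigr)^{p^{\nu_0}q^\nu},
\]
and $\ppp \mid (a^{q^d}-a) \mid (a^{mq^d}-a^m)$ gives the divisibility by $\ppp^{q^\nu p^{\nu_0}}$ for every $a\in\AA$. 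This is exactly the first step of the paper's proof, done in full.

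The gap is in the step you dismiss as routine. The assertion that $G_n(t_a)\in \AA[[t]]$ has integral coefficients is false once $n>q$: Gekeler's recursion (3.8) gives, e.g., $G_{q+1}(X)=X^{q+1}+X^2/[1]$, and in general the coefficients of $G_n(X)$ carry denominators coming from the coefficients of the Carlitz exponential, i.e.\ from the products of all monic polynomials of degree $l$ for those $l$ with $q^l\le n$. If $\ppp$ divided one of these denominators, the coefficient congruence would not transfer to a congruence of $t$-expansions modulo $\ppp^{q^\nu p^{\nu_0}}$. This is precisely where the hypothesis $d>\log_q(n)$ is used in the paper: every prime dividing the relevant denominators has degree $l\le \log_q(n)<d$, so $\ppp$ is coprime to all denominators of $G_n(X)$, while $t_a$ itself does lie in $\AA[[t]]$ for monic $a$, and the transfer then goes through. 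Your alternative guess for the role of $d>\log_q(n)$ --- that it is needed for $F_{k,n,l}$ to lie in the stated spaces --- is also not right: as you yourself note, the pair $((k-n)q^l+n,\,n)$ satisfies the hypothesis of Theorem~\ref{mainthm} for every $l\ge 0$ once $(k,n)$ does, with no reference to $d$. So the argument is salvageable, but the one place where the degree hypothesis actually does its work is exactly the step you waved away, and the justification you gave for it is wrong as stated.
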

 
Note that the weights of the forms in the congruence are $(k-n)q^{d+\nu} + n$ and $(k-n)q^\nu + n$, respectively.

\begin{proof} 

Let $\ppp$ be a prime of degree $d$. Then $\ppp \mid (a^{q^d} - a)$ for all $a \in \AA$. Therefore, we have
\[ 
\ppp^{q^\nu p^{\nu_0}}  \mid \left ( a^{(k-n)q^{d + \nu}} - a^{(k-n)q^\nu} \right )
\]
 for all $a \in \AA$. Because of the $\AA$-expansions on both sides, the congruence 
\[
 F_{k, n, d + \nu} \equiv F_{k, n, \nu} \bmod \ppp^{q^\nu p^{\nu_0}}
\]
will follow if we can prove that $\ppp$ does not divide the denominators of the coefficients of $G_n (X)$ (note that if $a \in \AA_+$, then $t_a$ has no denominators in its $t$-expansion). Since we are taking $d > \log_q(n)$, in each case we are considering, this follows from formula (3.8) in \cite{Gek}, combined with the fact (see \cite[Sec.~2.5]{Tha}) that the $q^l$-th coefficient of $e_{\ppi \AA} (z)$ equals the reciprocal of the product of all monic polynomials of degree $l$. \end{proof}

\begin{rmk} \label{bettercongruence}
 Note that we actually have
\[
 [d]^{q^\nu p^{\nu_0}}  \mid \left ( a^{(k-n)q^{d + \nu}} - a^{(k-n)q^\nu} \right ), \qquad \qquad \forall a \in A,
\]
where $[d] = T^{q^d} - T$ is the product of all monic primes of degree dividing $d$. Therefore, if $[d]$ is relatively prime to the denominators of the coefficients of $G_{n} (X)$, then we obtain the stronger congruence
\[
  F_{k, n, d + \nu} \equiv F_{k, n, \nu} \bmod [d]^{q^\nu p^{\nu_0}}.
\]
\end{rmk}

\begin{rmk}
 The proof of Theorem~\ref{aexpcongruence} is deceptively simple however this is because the $\AA$-expansions have packaged the $t$-expansions on both sides in a special way.  It is unclear how to prove the result of the previous theorem without observing the $\AA$-expansions, i.e., by just looking at the $t$-expansions or at the expressions in terms of $h$ and $g$.
\end{rmk}

\begin{rmk} One should note that Theorem \ref{aexpcongruence} gives congruences in two directions: for varying $d$ and fixed $\nu$, and for fixed $d$ and varying $\nu$. We will give examples of both.
\end{rmk}

 Some of the results before the present work, particularly $g_{q^d-1} \equiv 1 \bmod [d]$ from \cite[Prop.~6.12]{Gek}, were also proven by using the  $\AA$-expansions of Eisenstein series. It is interesting to see if there are other congruences that come from $\AA$-expansions of forms that are not eigenforms.

We end this subsection with several examples of congruences obtained from Theorem~\ref{aexpcongruence}.

\begin{example} First, we present examples with increasing $d$ and fixed $\nu = 0$. We have
\[
F_{q + 1, 1, d} = F_{d+1} = \sum_{a \in \AA_+} a^{q \cdot q^d} t_a ,\] so that $h = F_{q +1, 1, 0}$, $h g^q = F_{q + 1, 1, 1}$, etc.

This gives
\[
\begin{aligned}
  & h \equiv h g^q = F_{2} \bmod [1]^q, \\
  & h \equiv h g^{q^2 + q} - [1]^{q^2} h^{q(q-1) + 1} = F_{3} \bmod [2]^q, \\
  & h  \equiv h g^{q^3 + q^2 + q} - [2]^{q^2} h^{q(q-1) + 1} g^{q^3} - [1]^{q^3} h^{q^2(q-1) + 1} g^q = F_4 \bmod [3]^q . \\
\end{aligned}
\]

Another family for which we obtain congruences is 
\[
 F_{q(q-1) + 1, 1, d} = \sum_{a \in \AA_+} a^{q(q-1)q^d} t_a^{q-1}, \qquad \qquad d \geq 0,
\]
where $\delt = F_{q(q-1)+1, 1, 0}$. We have the congruences
\[
\begin{aligned}
  & \delt \equiv \delt g^{q^2 - q} = F_{q(q-1)+1, 1, 1} \bmod [1]^q , \\
  & \delt \equiv \delt g^{q^3 - q} + [1]^{q^2} \delt^{q+1} g^{q^3 - q^2 - 2 q} + [1]^{(q-1)q^2} \delt^{q^2 - q +1} = F_{q(q-1)+1, 1, 2} \bmod [2]^q. \\
\end{aligned}
\]
Since $G_1(X) = X$ and $G_{q-1} (X) = X^{q-1}$, we are in the situation described in Remark~\ref{bettercongruence}. We note that we cannot improve the congruence to $\bmod [d+1]$ (i.e., to $\mod \ppp$ with $\ppp$ of degree $d+1$) because
\[
  h \not \equiv h g^q \bmod [2], \qquad h \not \equiv h g^{q^2 + q} - [1]^{q^2} h^{q(q-1) + 1} \bmod [3].
\]
\end{example}

\begin{example}
 Let us fix $d = 1$ and let $\nu$ vary.  

 Notice that $F_{q+1, 1, 1 + \nu} = F_{2 + \nu}$. Then we have

\[
 \begin{aligned}
  F_5 & =   ( h g^{q^4 + q^3 + q^2 + q}   - [3]^{q^2} h^{q(q-1) + 1} g^{q^4 + q^3} - [2]^{q^3} h^{q^2(q-1) + 1} g^{q^4 + q} \\ 
      & ~ \ \    - [1]^{q^4} h^{q^3(q-1) + 1} g^{q^2 + q}   + [1]^{q^4}  [3]^{q^2} h^{(q^3 + q)(q-1) + 1}) \\
      & \equiv (h g^{q^3 + q^2 + q} - [2]^{q^2} h^{q (q-1)+ 1} g^{q^3} - [1]^{q^3} h^{q^2 (q-1) +1} g^q)   \bmod [1]^{q^2 \cdot q} \\
      & = F_4
 \end{aligned}
\]

We can also see $F_6 \equiv F_5 \bmod [1]^{q^3 \cdot q}$, $F_7 \equiv F_6 \bmod [1]^{q^4 \cdot q}$, $\ldots$. 
\end{example}

\subsection{Eigenproducts} In \cite{Gek1}, Gekeler proved\footnote{Actually Gekeler derived a product expansion for $\delt$. The result for $h$ follows immediately from that.} that $h$ has a product expansion that is indexed by the monic polynomials
\[
  h = t \prod_{a \in \AA_+} \psi_a (t)^{q^2 - 1},
\]
where $\psi_a$ is the $a^\text{th}$ inverse cyclotomic polynomial $\psi_a (X) := \rho_a (X^{-1}) X^{q^d}$ (see \cite[Eq.~4.6]{Gek}).

Theorem~\ref{mainthm} allows us to show that there are identities between $\AA$-expansions and product expansions indexed by $\AA_+$:

\begin{theorem}  \label{products} If $1 \leq j \leq q$, then
\[
 h^j = \sum_{a \in \AA_+} a^{q j} t_a^j = t^j \prod_{a \in \AA_+}  \psi_a(t)^{(q^2-1)j} .
\]
In particular, we have
\[
  h = \sum_{a \in \AA_+} a^q t_a , \qquad \qquad  \delt = h^{q-1} = \sum_{a \in \AA_+} a^{q(q-1)} t_a^{q-1}.
\]
\end{theorem}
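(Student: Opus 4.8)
The plan is to combine three facts: Gekeler's product expansion for $h$, the special case $n=1$ of Theorem~\ref{mainthm} (which gives the $\AA$-expansion of $h^j$ when $j$ satisfies the hypotheses), and the uniqueness of $\AA$-expansions (Theorem~\ref{uniq}). The key observation that makes the middle term of the displayed identity legitimate is that for $1 \le j \le q$ the pair $(k, n) = (q + 1 + (j-1)(q-1), 1)$ — equivalently $k = qj + 1$ after noting $q+1+(j-1)(q-1) = qj+1$ — satisfies the hypothesis of Theorem~\ref{mainthm}: indeed $k - 2n = qj - 1$, and $qj - 1 = q(j-1) + (q-1) \equiv 0 \bmod (q-1)$ is a positive multiple of $q-1$ for $j \ge 1$, while the condition $n \le p^{\valu_p(k-n)}$ reads $1 \le p^{\valu_p(qj)}$, which is automatic. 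Hence Theorem~\ref{mainthm} applies and produces the form $f_{qj+1,\,1} = \sum_{a \in \AA_+} a^{qj} t_a \in S_{qj+1,\,1}(\GL)$.

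First I would verify the arithmetic above and record that $f_{qj+1,1}$ lies in a one-dimensional space, or at least argue directly that it must equal $h^j$. The cleanest route is via uniqueness: $h^j$ is a cuspidal modular form of weight $qj+1$ and type $1$ (weights and types are additive, and $h$ has weight $q+1$, type $1$), and it is an eigenform with eigensystem $\lambda_\ppp = \wp$ for each $\ppp$ — this follows because $h$ itself has $\AA$-expansion $\sum a^q t_a$ (the $n=1$, $s=0$ instance, i.e., L\'opez's result, or Theorem~\ref{mainthm}), so by Theorem~\ref{aexpeigenvalues} $h$ has eigensystem $\{\lambda_\ppp = \wp\}$, and a product of Hecke eigenforms that happens to be an eigenform must then have eigensystem $\{\lambda_\ppp = \wp^j\}$... but in fact the more robust argument is simply this: $h^j$ has an $\AA$-expansion (because $h$ does and raising to the $j$-th power with $j \le q$ is compatible with the $\AA$-expansion structure — this is exactly the content one extracts from Theorem~\ref{mainthm} giving $f_{qj+1,1}$ explicitly, together with Theorem~\ref{uniq} pinning down that the $\AA$-expansion with exponent $n=1$ is unique). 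So I would instead argue: $h^j$ and $f_{qj+1,1}$ both lie in $S_{qj+1,1}(\GL)$, both possess $\AA$-expansions with exponent $n=1$ and the same leading coefficient $c_1 = 1$; if $h^j$ is known to be a scalar multiple of $f_{qj+1,1}$ (e.g.\ by one-dimensionality of the relevant quotient, or because $f_{qj+1,1}$ spans the unique line of forms with eigensystem $\{\wp^j\}$ and $\AA$-expansion) then comparing leading coefficients forces $h^j = f_{qj+1,1}$.

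Next, for the product side, I would raise Gekeler's identity $h = t \prod_{a \in \AA_+} \psi_a(t)^{q^2-1}$ to the $j$-th power, obtaining $h^j = t^j \prod_{a \in \AA_+} \psi_a(t)^{(q^2-1)j}$ formally in $\CC[[t]]$ — this step is purely formal manipulation of the convergent infinite product and requires no new input beyond Gekeler's theorem; convergence of the product for $|z|_i > 1$ is inherited from the $j=1$ case since each factor is raised to a larger power but the $\psi_a(t) = 1 + O(t^{q^d})$ shape is unchanged. Chaining the two equalities gives the full displayed identity. Finally, specializing $j=1$ recovers $h = \sum a^q t_a$, and $j = q-1$ gives $\delt = h^{q-1} = \sum a^{q(q-1)} t_a^{q-1}$, using that $\delt = h^{q-1}$ is standard (cf.\ \cite{Gek}, and this is also the $n = q-1$ case recovering L\'opez's formula for $\delt$); here one should note $G_1(t_a)^{q-1}$ versus $G_{q-1}(t_{a})$ — since $G_1(X) = X$ and $G_{q-1}(X) = X^{q-1}$ these agree, so $t_a^{q-1} = G_{q-1}(t_a)$ and the $j = q-1$ specialization is consistent with the $\AA$-expansion of $\delt$ with exponent $q-1$.

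The main obstacle is the step identifying $h^j$ with $f_{qj+1,1}$, i.e., showing $h^j$ genuinely \emph{has} an $\AA$-expansion rather than merely that some form with the right weight and eigensystem does. The honest way to close this gap is to invoke that $h$ has an $\AA$-expansion and that, for $j \le q$, the $j$-th power operation interacts well with $\AA$-expansions — concretely, that $\bigl(\sum_a c_a t_a\bigr)^j$ can be re-expanded as $\sum_a c_a^{\,\prime} t_a^j$ when $j \le q$, which is plausible from the additive/$\Fq$-linear nature of $e_{\ppi\AA}$ but needs the kind of multinomial-vanishing argument (cross terms having $\Fq$-coefficients that die, or the $p$-power structure) that the paper's later conjectural remark on $\mathbb{E}^j$ gestures at; for $j \le q$ this should be unconditional. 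Alternatively, and more cleanly, one cites that $S_{qj+1,1}(\GL)$ (or its relevant eigenspace) is one-dimensional for $1 \le j \le q$ and spanned by $f_{qj+1,1}$, so $h^j$, being a nonzero element of it, equals $f_{qj+1,1}$ after normalizing leading $t$-coefficients to $1$ — this is the route I would take in the write-up, relegating the dimension count to a citation of \cite{Gek}.
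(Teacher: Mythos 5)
There is a genuine error in your setup: you assign $h^j$ the wrong weight, type, and $\AA$-exponent, and as a result you compare it with the wrong form. Since $h$ has weight $q+1$ and type $1$, the power $h^j$ has weight $j(q+1)$ and type $j \bmod (q-1)$, and its $t$-expansion begins with $t^j$ (it vanishes to order $j$ at infinity). The instance of Theorem~\ref{mainthm} relevant to the statement is therefore $(k,n) = (j(q+1),\, j)$, which yields
\[
 f_{j(q+1),\,j} \;=\; \sum_{a \in \AA_+} a^{jq}\, G_j(t_a) \;=\; \sum_{a \in \AA_+} a^{jq}\, t_a^{\,j}
\]
using $G_j(X) = X^j$ for $j \le q$; the hypotheses hold because $k-2n = j(q-1)$ is a positive multiple of $q-1$ and $n = j \le q \le p^{\valu_p(jq)}$. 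You instead take $(k,n) = (qj+1,\,1)$ --- note that your identity $q+1+(j-1)(q-1) = qj+1$ is false for $j \ge 2$; the left-hand side equals $j(q-1)+2$ --- and you compare $h^j$ with $\sum_{a} a^{qj} t_a$. That is a different modular form: it has $\AA$-exponent $1$, leading term $t$, and lives in $S_{qj+1,1}(\GL)$, whereas $h^j$ does not lie in that space and vanishes to order $j$ at the cusp. So the claims that ``$h^j$ is a cuspidal modular form of weight $qj+1$ and type $1$'' and that $h^j$ and $f_{qj+1,1}$ ``have the same leading coefficient $c_1 = 1$'' both fail for $j \ge 2$, and the identification cannot close. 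Your subsequent worries (whether $h^j$ a priori has an $\AA$-expansion, eigensystem bookkeeping, a multinomial-vanishing argument for powers of $\AA$-expansions) are symptoms of this misidentification rather than the real obstacle.

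With the parameters corrected, the argument is exactly the paper's and is short: both $h^j$ and $f_{j(q+1),j}$ lie in $S_{j(q+1),j}(\GL)$, which is one-dimensional (for $1 \le j \le q-1$; the basis monomials $g^\alpha h^\beta$ of that weight and type with $\beta \equiv j \bmod (q-1)$ and $h \mid g^\alpha h^\beta$ reduce to $h^j$ alone --- for $j=q$ the space also contains $g^{q+1}h$, but there the identity follows from the case $j=1$ by applying the $q$-power Frobenius, since $x \mapsto x^q$ is additive), and both have $t$-expansion beginning with $t^j$, so they are equal; no a priori compatibility of $j$-th powers with $\AA$-expansions is needed. The second half of your argument --- raising Gekeler's product expansion $h = t\prod_{a \in \AA_+}\psi_a(t)^{q^2-1}$ to the $j$-th power and chaining the equalities, then specializing $j=1$ and $j=q-1$ --- is correct as written.
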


\begin{proof}
We know that $h^j$ as well as the claimed $\AA$-expansion are in the one-dimensional space $S_{j(q+1),j} (\GL)$ by Theorem~\ref{mainthm}. Comparing the first non-zero coefficient of the $t$-expansions on both sides, the claimed equality follows.
\end{proof}

\begin{rmk} We remark that while the relations
\[
  h^j = t^j \prod_{a \in \AA_+} \psi_a(t)^{(q^2 - 1)j}
\]
are immediate from the product formula for $h$, the equations that follow from Theorem~\ref{products}
\[
  \left ( \sum_{a \in \AA_+} a^q t_a \right )^j = \sum_{a \in \AA_+} a^{qj} t_a^j, \qquad \qquad 1 \leq j \leq q
\]
are non-trivial and imply relations between the coefficients of the $t$-expansions on both sides.
\end{rmk}



 \begin{rmk} \label{cj1} Computer experimentations suggest that Theorem~\ref{products} is part of a more general phenomenon. Namely, if $G_n (X) \cdot G_{n'} (X) = G_{n + n'}  (X)$, then there exist weights $k, k'$ such that the pairs $(k, n), (k', n'), (k+k', n+n')$ satisfy the hypothesis of Theorem~\ref{mainthm}, and, for all $l, l' \in \ZZ_{\geq 0}$, the product
 \[
 	\left ( \sum_{a \in A_+} a^{q^l (k-n)} G_n (t_a) \right) \cdot \left ( \sum_{a \in A_+} a^{q^{l'} (k' -n')} G_{n'} (t_a) \right)
\]
equals
\[
	\sum_{a \in A_+} a^{q^l (k-n) + q^{l'} (k' - n')} G_{n + n'} (t_a).
\]
Given $n, n'$ and $q$ such that $G_n (X) \cdot G_{n'} (X) = G_{n+n'} (X)$, there could be more than one pair of integers $(k, k')$ that works, as Example~\ref{20} below shows. Because a Drinfeld modular form of type $k$ and weight $m$ is uniquely determined by the first $i$ coefficients in its $t$-expansion, with $i \leq \frac{k}{q+1} + 1$, we can verify the equality above case by case. We present several examples for various $q$. 	
\end{rmk}



\begin{example} \label{20} Let $q = 3$. Then our considerations suggest that
\[
	\begin{aligned}
	& \left ( \sum_{a \in A_+} a^{3 \cdot 3^l} t_a \right) \cdot \left ( \sum_{a \in A_+} a^{6 \cdot 3^{l'}} t_a^2 \right) & = \sum_{a \in A_+} a^{3 \cdot 3^l + 6 \cdot 3^{l'}} t_a^3, \\
	& \left ( \sum_{a \in A_+} a^{3 \cdot 3^l} t_a \right) \cdot \left (\sum_{a \in A_+} a^{12 \cdot 3^{l'}} t_a^2 \right) & = \sum_{a \in A_+} a^{3 \cdot 3^l + 12 \cdot 3^{l'}} t_a^3, 
	\end{aligned}
\]
for all $l, l' \in \ZZ_{\geq 0}$.

We have verified the equalities for $l, l' \leq 4$.
\end{example}

\begin{example}
Let $q = 3$. Then $G_7(X) \cdot G_8(X) = G_{15}(X)$. By the procedure in Remark~\ref{cj1}, we can prove that
\[
  \left ( \sum_{a \in \AA_+} a^9 G_7(t_a) \right ) \cdot \left ( \sum_{a \in \AA_+} a^{18} G_8(t_a) \right) = \sum_{a \in \AA_+} a^{27} G_{15} (t_a).
\]

Let $q = 4$. Then $G_7(X) \cdot G_4(X) = G_{11}(X)$ and we have
\[
 \left ( \sum_{a \in \AA_+} a^{16} G_7(t_a) \right ) \cdot \left ( \sum_{a \in \AA_+} a^{16} G_4 (t_a) \right ) = \sum_{a \in \AA_+} a^{32} G_{11}(t_a).
\]
\end{example}

\begin{rmk} The condition $G_n(X) \cdot G_{n'}(X) = G_{n + n'}(X)$ is necessary even if the pairs $(k, n), (k', n'), (k + k', n + n')$ all satisfy the hypothesis of Theorem~\ref{mainthm}, as the example
 \[
   \left ( \sum_{a \in \AA_+} a^{18} G_4(t_a) \right ) \cdot \left ( \sum_{a \in \AA_+} a^{18} G_6(t_a) \right ) \neq \sum_{a \in \AA_+} a^{36} G_{10} (t_a),
 \]
when $q = 3$,  shows.
\end{rmk}

\begin{example} \label{nn1} Remark~\ref{cj1} does not account for all examples of equalities between an $A$-expansion and a product of $A$-expansions that we have found. We have verified (when $q = 3$) that
\[
	\left ( \sum_{a \in A_+} a^5 t_a \right) \cdot \left ( \sum_{a \in A_+} a^7 t_a \right) = \sum_{a \in A_+} a^{12} t_a, 
\]
however
\[
	\left ( \sum_{a \in A_+} a^{5 \cdot 3} t_a \right) \cdot \left ( \sum_{a \in A_+} a^7 t_a \right) \neq \sum_{a \in A_+} a^{5 \cdot 3 + 7} t_a.
\]
We have not found other exceptions to Remark~\ref{cj1} in our computations. We suspect that such exceptions are forced by dimensional reasons. For instance, the form
\[
	\sum_{a \in A_+} a^{12} t_a^2
\]
generates the one-dimensional space of double-cuspidal forms of weight $14$.
\end{example}

\begin{rmk} \label{nontrivialeigenproducts}
 Remark~\ref{cj1} and Theorem \ref{aexpmultone} give examples of eigenforms that can be represented as products of eigenforms. Classically this rarely happens and such products have been explicitly determined (see \cite{Jo1}). In contrast to the classical case, in the case of Drinfeld modular forms we can have high order vanishing at the cusps. In the case of Drinfeld modular forms, one `trivial' way of obtaining infinitely many such products is to take $p^\text{th}$ powers of known eigenforms (for example, $h, h^p, h^{p^2}, \ldots$).  Our results yield `non-trivial' examples of such eigenproducts. 
It is interesting to see if Remark \ref{cj1}, together with some exceptional cases like Example~\ref{nn1}, and `trivial' products are the only eigenproducts in the Drinfeld setting.
\end{rmk}

\section{The Proof of Theorem \ref{mainthm}} \label{the proof}

Throughout this section, we will assume that $k$ and $n$ are positive integers such that $k \geq 2 n$, $k - 2n \equiv 0 \bmod (q-1)$ and $n \leq p^{\valu_p(k-n)}$. We use the standard notation $\AA_{<d} := \{ a \in \AA : \degr(a) < d \}$, $\AA_{<d+} := \AA_{<d} \cap \AA_+$, and $\AA_{<d}^2 := \{ (a, b): a, b \in \AA_{<d} \}$.

We note that $n \leq p^{\valu_p (k-n)}$ if and only if $(T-1)^n \mid (T^{k-n} - 1)$. Let 
\[
 F(T) = \sum_{i = 0}^{k - 2n} \aalpha_i T^i
\]
be defined by $T^{k-n} - 1 = (T - 1)^n F(T)$. If $k = 2n$, then $n = k-n$ is a $p^\text{th}$-power and $F(T) = 1$. In general, by setting $T = 0$, we see that $\aalpha_0 = (-1)^{n+1}$.

\begin{lemma} \label{keylemma} Given $r > 0$, there exists a positive integer $d_r$ such that for all $d \geq d_r$ we have
\[
  \sum_{a \in \AA_{<d}} a^j = 0, \qquad \qquad \forall j,\  1 \leq j \leq r.
\]
\end{lemma}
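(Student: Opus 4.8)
The plan is to prove the vanishing of the power sums $\sum_{a \in \AA_{<d}} a^j$ for all exponents $j$ in a given range $1 \leq j \leq r$, once $d$ is large enough. First I would observe that $\AA_{<d}$ is an $\Fq$-vector space of dimension $d$, with basis $1, T, \dots, T^{d-1}$; thus $\sum_{a \in \AA_{<d}} a^j$ is a symmetric expression that one can attack by a standard ``additive character'' or ``finite differences'' argument. Concretely, I would expand $a^j = (c_0 + c_1 T + \dots + c_{d-1}T^{d-1})^j$ by the multinomial theorem, where $a$ ranges as $(c_0, \dots, c_{d-1})$ runs over $\Fq^d$, and interchange the order of summation so that the outer sum is over monomials $T^{\vec e}$ and the inner sum is $\sum_{\vec c \in \Fq^d} c_0^{e_0}\cdots c_{d-1}^{e_{d-1}}$ with $e_0 + \dots + e_{d-1} = j$. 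This inner sum factors as $\prod_{i=0}^{d-1}\bigl(\sum_{c \in \Fq} c^{e_i}\bigr)$.

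The key elementary fact is that $\sum_{c \in \Fq} c^{e} = 0$ unless $e$ is a positive multiple of $q-1$ (in which case it equals $-1$), with the convention $\sum_{c\in\Fq} c^0 = q = 0$ in characteristic $p$. Hence a term survives only if \emph{every} exponent $e_i$ is a positive multiple of $q-1$; in particular each $e_i \geq q-1$, so $j = \sum_i e_i \geq (q-1)\cdot\#\{i : e_i > 0\}$. Now if we take $d_r := r+1$ (or any $d > r/(q-1)$ suffices, but $d_r = r+1$ is safe and clean), then for $d \geq d_r$ and $1 \leq j \leq r$ there are fewer than $d$ ``active'' indices, so at least one $e_i = 0$, forcing the factor $\sum_{c\in\Fq}c^0 = 0$. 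Therefore every term in the expansion vanishes and $\sum_{a\in\AA_{<d}}a^j = 0$. Actually the argument shows something cleaner: for \emph{every} $d$ with $d(q-1) > j \geq 1$ the sum already vanishes, so one may take $d_r = r+1$ uniformly.

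I do not expect a serious obstacle here; the only point requiring care is the bookkeeping of the multinomial expansion and the edge case $e_i = 0$ contributing $\sum_{c \in \Fq} 1 = q \equiv 0$, which is precisely what makes the short exponents vanish — this is the crux and should be stated explicitly rather than glossed. An alternative, perhaps slicker, route avoiding multinomials: note $\sum_{a \in \AA_{<d}} a^j$ is invariant under the substitution $a \mapsto a + c$ for any constant $c \in \Fq$ (since $c$ ranges over all of $\Fq \subset \AA_{<d}$ when $d \geq 1$), and more generally under $a \mapsto \zeta a$ for $\zeta \in \Fq^\times$; combined with the fact that $a \mapsto a^j$ has degree $j < (q-1)d$, one can run a finite-difference/Möbius-type argument over the $\Fq$-structure to kill the sum. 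I would present the multinomial version as the main proof since it is the most transparent, and perhaps remark on the substitution-invariance as motivation.
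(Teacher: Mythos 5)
Your argument is correct and complete, but it takes a genuinely different route from the paper's. The paper splits into two cases: for $(q-1)\nmid j$ the sum vanishes by the elementary observation that $\AA_{<d}\setminus\{0\}$ is stable under scaling by $\Fq^\ast$ and $\sum_{\theta\in\Fq^\ast}\theta^j=0$; for $(q-1)\mid j$ it simply invokes a theorem of Lee on power sums of polynomials (citing Thakur's book). Your multinomial expansion handles both cases at once and is self-contained: writing $a=\sum_{i<d}c_iT^i$ and factoring the inner sum as $\prod_{i=0}^{d-1}\bigl(\sum_{c\in\Fq}c^{e_i}\bigr)$, a term can survive only when every $e_i$ is a positive multiple of $q-1$, which forces $j=\sum_i e_i\geq d(q-1)$; hence $\sum_{a\in\AA_{<d}}a^j=0$ whenever $1\leq j<d(q-1)$, so $d_r=r+1$ (indeed $d_r=\lfloor r/(q-1)\rfloor+1$) works. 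This is in fact the standard opening move in the proof of Lee's theorem itself; Lee's sharper statement (vanishing whenever the $p$-adic digit sum of $j$ is less than $de(p-1)$, where $q=p^e$) comes from additionally analyzing the multinomial coefficients via Lucas' theorem, a refinement you do not need since the lemma only asks for the existence of some $d_r$. The one point you rightly flag --- the convention $0^0=1$ so that the factorization holds and $\sum_{c\in\Fq}c^{0}=q=0$ kills any term with an inactive index --- is exactly the crux, and your handling of it is correct. What your approach buys is an explicit, elementary, citation-free bound; what the paper's approach buys is brevity and, via Lee, a much stronger vanishing range that is not needed here.
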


\begin{proof} Define
\[
 S_{r, d}  := \sum_{a \in \AA_{<d}} a^r.
\]
If $(q-1) \nmid r$, then $S_{r, d} = 0$. This follows since 
\[
\AA_{<d} - \{ 0 \} = \{ \theta a_+: \theta \in \Fq^\ast, a_+ \in \AA_{<d+} \}
\] and summing over $\Fq^\ast$ first, we get $0$.

The result for $r \equiv 0 \mod (q-1)$ is due to Lee (see \cite[Section 5.6]{Tha}).
\end{proof}

\begin{rmk}
If $q = p^e$, then it is a result due to Lee that $S_{r, d} = 0$ whenever the sum of the $p$-adic digits of $r$ is  $< d e (p-1)$. A complete vanishing criterion was given by Carlitz. However, Carlitz simply asserts the result without proving it. It turns that the proof is not trivial and was only achieved by Sheats in the late 1990s. For more on this, see \cite[Sec.~5.6-5.8]{Tha} and the references therein.  
\end{rmk}

\begin{rmk}
 The previous lemma also follows easily from the vanishing of the Carlitz zeta function at negative `even' integers, which was first proved by Goss (see \cite[Sec.~8.8,~8.13]{GossBook}).
\end{rmk}

\begin{lemma} \label{lem2}
 If $d \geq d_{k-2n}$, then
\[
  \sideset{}{'} \sum_{(u, v) \in \AA_{<d}^2} \frac{(vz)^{k-n} - u^{k-n}}{(vz - u)^n} = \begin{cases} 0 & \quad k-2n \neq 0, \\
                                                                          -1  & \quad k-2n = 0.
                                                                         \end{cases}
\]
Here the prime on the summation means that we are taking pairs $(u, v) \neq (0, 0)$.
\end{lemma}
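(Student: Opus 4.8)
The plan is to expand the rational function $\frac{(vz)^{k-n} - u^{k-n}}{(vz-u)^n}$ using the factorization $T^{k-n} - 1 = (T-1)^n F(T)$ established above, and then to exploit the power-sum vanishing from Lemma \ref{keylemma} (with $r = k-2n$). First I would treat the degenerate case $k = 2n$ separately: here $F(T) = 1$, so the summand is simply $\frac{1}{(vz-u)^{0}} \cdot \frac{(vz)^{n} - u^{n}}{(vz - u)^{n}}$... more carefully, $\frac{(vz)^{n} - u^{n}}{(vz-u)^{n}} = \left(\frac{vz}{vz-u}\right)^{n} - \left(\frac{u}{vz-u}\right)^{n}$; since $n$ is a $p$-th power this equals $\left(\frac{vz}{vz-u} - \frac{u}{vz-u}\right)^{n} = 1$ whenever $vz - u \neq 0$, so the primed sum over $\AA_{<d}^2$ just counts $\#(\AA_{<d}^2 \setminus \{0\}) = q^{2d} - 1 \equiv -1$ in $\CC$ (characteristic $p$), giving the value $-1$. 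For $z$ generic, $vz - u = 0$ forces $(u,v) = (0,0)$, so no terms are omitted beyond the origin.

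For the main case $k - 2n > 0$, set $w = vz$ and write, for a pair with $w \neq u$,
\[
\frac{w^{k-n} - u^{k-n}}{(w-u)^n} = \frac{w^{k-n} - u^{k-n}}{(w-u)^n}.
\]
Using $X^{k-n} - Y^{k-n} = (X-Y)^n \sum_{i=0}^{k-2n} \xi_i X^{i} Y^{k-2n-i}$ — which is the homogenization of $T^{k-n} - 1 = (T-1)^n F(T)$ with $F(T) = \sum \xi_i T^i$, applied with $T = X/Y$ and cleared of denominators — the summand becomes the \emph{polynomial}
\[
\sum_{i=0}^{k-2n} \xi_i (vz)^{i} u^{k-2n-i},
\]
which, crucially, is defined for \emph{all} $(u,v)$, including those with $vz = u$; so the primed sum over $\AA_{<d}^2$ equals $\sum_{(u,v) \in \AA_{<d}^2} \sum_{i} \xi_i (vz)^i u^{k-2n-i}$ minus the $(0,0)$ term (which is $\xi_0 \cdot 0^{\cdots}$ — zero since $k-2n>0$). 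Interchanging the finite sums, this is $\sum_{i=0}^{k-2n} \xi_i z^i \left(\sum_{v \in \AA_{<d}} v^i\right)\left(\sum_{u \in \AA_{<d}} u^{k-2n-i}\right)$. For each $i$ in the range $0 \le i \le k-2n$, at least one of the exponents $i$ or $k-2n-i$ lies in $\{1, \dots, k-2n\}$ unless both are $0$, i.e. unless $k=2n$; since we are in the case $k-2n > 0$, for every $i$ one of the two inner power sums vanishes by Lemma \ref{keylemma} once $d \geq d_{k-2n}$. Hence the whole expression is $0$.

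The main obstacle — really the only subtle point — is the bookkeeping that turns a sum of rational functions (with poles along $vz = u$) into a sum of a single polynomial identity valid everywhere, so that Fubini on finite sums is unconditionally legitimate; this is exactly where the hypothesis $n \le p^{\valu_p(k-n)}$ (equivalently $(T-1)^n \mid T^{k-n}-1$) is used, guaranteeing $F(T) \in \AA$ has the right degree $k-2n$ and that the homogenized identity has no spurious denominators. After that, everything reduces to the vanishing of power sums over $\AA_{<d}$ for exponents up to $k-2n$, which is Lemma \ref{keylemma}.
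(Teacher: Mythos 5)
Your proof is correct and follows essentially the same route as the paper: both rest on the factorization $T^{k-n}-1=(T-1)^nF(T)$ to turn the summand into a polynomial in $u$ and $vz$, followed by the power-sum vanishing of Lemma~\ref{keylemma}. The only (cosmetic) differences are that you use the homogenized identity $X^{k-n}-Y^{k-n}=(X-Y)^n\sum_i\xi_iX^iY^{k-2n-i}$, which lets you avoid the paper's three-way split into $u=0$, $v=0$, and $uv\neq 0$, and that you dispatch the $k=2n$ case by the direct Frobenius observation that every term equals $1$.
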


\begin{proof}
We break the sum into three parts. 

When $u = 0$ we have $v \neq 0$. By Lemma \ref{keylemma}, 
\[
 \sideset{}{'} \sum_{v \in \AA_{<d}} v^{k - 2n} z^{k-2n} = 0 \qquad \qquad \text{ for $k-2n \neq 0$}.
\]
The case $k - 2n = 0$ gives $-1$ for the sum, since we are summing over non-zero~$v$. Therefore,
\[
 \sideset{}{'} \sum_{v \in \AA_{<d}} v^{k - 2n} z^{k-2n} = \begin{cases} 0 & \quad k - 2n \neq 0, \\
                                             -1 & \quad k-2n = 0.
                                            \end{cases}
\]
When $v = 0$ we have $u \neq 0$. By the same argument as for the previous sum,
\[
 \sideset{}{'} \sum_{u \in \AA_{<d}} u^{k-2n} = \begin{cases} 0 & \quad k-2n \neq 0, \\
                                   -1 & \quad k-2n = 0.
                                  \end{cases}
\]
If $v \neq 0, u \neq 0$, then
\[
\begin{aligned}
  \sideset{}{'} \sum_{ u \in \AA_{<d}} \sideset{}{'} \sum_{v \in \AA_{<d}} \frac{(vz)^{k-n} - u^{k-n}}{(vz - u)^n} & = \sideset{}{'} \sum_{ u \in \AA_{<d}} \sideset{}{'} \sum_{v \in \AA_{<d}} u^{k-2n} F\left ( \frac{vz}{u} \right) \\
& = \sideset{}{'} \sum_{ u \in \AA_{<d}} \sideset{}{'} \sum_{v \in \AA_{<d}} \sum_{i = 0}^{k-2n} \aalpha_i (vz)^i u^{k-2n-i}.
\end{aligned}
\]
Summing over $v$ and using Lemma \ref{keylemma} we see that only the term $i = 0$ remains. But if $k-2n \neq 0$,  then for $i = 0$ we can sum over $u$ and get $0$. Therefore, if $k - 2n \neq 0$, then
\[
  \sideset{}{'} \sum_{ u \in \AA_{<d}} \sideset{}{'} \sum_{v \in \AA_{<d}} \frac{(vz)^{k-n} - u^{k-n}}{(vz-u)^n} = 0.
\]
On the other hand, if $k-2n = 0$, we have
\[
   \sideset{}{'} \sum_{ u \in \AA_{<d}} \sideset{}{'} \sum_{v \in \AA_{<d}} \frac{(vz)^{k-n} - u^{k-n}}{(vz - u)^n} = \sideset{}{'} \sum_{u \in \AA_{<d}} \sideset{}{'} \sum_{v \in \AA_{<d}} \aalpha_0 (vz)^0 u^0 = \aalpha_0 = (-1)^{n+1}. 
\]
Combining these proves the lemma.
\end{proof}

\begin{lemma} \label{lem3} Let $k - 2n > 0$. 
 If $d \geq d_{k-2 n}$, then for any $a, b \in T^d \AA$ (not both zero) we have
\[
 \sum_{(u, v) \in \AA_{<d}^2} \frac{(a+u)^{k-n}}{((a+u)z + b + v)^n} = \sideset{}{'} \sum_{(u, v) \in \AA_{<d}^2}  \frac{(bu - av)^{k-n}}{(az + b)^{k-n} ((a+u)z + b+v)^n}.
\]
Note that the left sum does not have the condition that $(u, v) \neq (0, 0)$.
\end{lemma}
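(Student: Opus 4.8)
\textbf{Proof proposal for Lemma~\ref{lem3}.} The plan is to verify the claimed identity by a direct algebraic manipulation of the summand, using the definition of $F(T)$ and the vanishing sums from Lemma~\ref{keylemma}. First I would observe that for fixed $a, b \in T^d A$, not both zero, and for $(u,v)$ ranging over $A_{<d}^2$, the quantity $(a+u)z + b + v$ is never zero (here we use $z \in \Omega$, so $z \notin K_\infty$, hence $(a+u)z + (b+v) = 0$ would force $a + u = 0$ and $b + v = 0$; since $\deg u, \deg v < d$ while $a, b \in T^d A$, this happens only if $a = b = 0$, contrary to hypothesis). So every denominator on both sides is a unit, and the manipulation is legitimate.

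Next I would rewrite the left-hand summand. Write $w = (a+u)z + (b+v)$ and $W = az + b$, so that $w = W + (uz + v)$. The key is the polynomial identity coming from $T^{k-n} - 1 = (T-1)^n F(T)$: substituting $T = X/Y$ and clearing denominators gives $X^{k-n} - Y^{k-n} = (X - Y)^n Y^{k-2n} F(X/Y)$, i.e.
\[
 X^{k-n} - Y^{k-n} = (X-Y)^n \sum_{i=0}^{k-2n} \aalpha_i X^i Y^{k-2n-i}.
\]
Apply this with $X = (a+u)z + (b+v) = w$... actually the cleaner route is to expand $(a+u)^{k-n} = \bigl((az+b) - (vz - u)\cdot(\text{something})\bigr)$; more precisely I would use $(a+u)W - (az+b)\cdot\frac{w}{1}$-type bookkeeping. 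The identity I actually want is: $(a+u)\bigl((az+b)\bigr) - (az + b)\cdot(a+u)$ vanishes trivially, so instead consider
\[
 (a+u)\cdot w - (a z + b + v)\cdot W \quad\text{rearranged as}\quad (a+u)(b+v) - (az+b+v) \cdot 0,
\]
hmm — the genuinely useful elementary identity is $bu - av = (a+u)(b) - (b+v)(a) $, no: $(a+u)b - (b+v)a = ab + ub - ab - va = ub - va = bu - av$. Thus $(a+u)(az+b) - (a)\bigl((a+u)z + (b+v)\bigr) = (a+u)(az+b) - a(a+u)z - a(b+v) = (a+u)b - a(b+v) = bu - av$. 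Dividing by $W\cdot w$ (both units) we get the crucial rational-function identity
\[
 \frac{a+u}{w} = \frac{a}{W} + \frac{bu - av}{W\, w}.
\]
Raising this to the $(k-n)$th power and using that $a, b \in T^d A$ so that $\deg a, \deg b \geq d$ while the "error" terms have controlled degree, I would expand the binomial and push through the sum over $(u,v) \in A_{<d}^2$.

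The main work is then to show that in
\[
 \sum_{(u,v)\in A_{<d}^2}\Bigl(\frac{a}{W} + \frac{bu-av}{Ww}\Bigr)^{k-n} \frac{1}{w^{n - (k-n)}}
\]
(handled more carefully: write $\frac{(a+u)^{k-n}}{w^n} = \frac{1}{w^{2n-k}}\cdot\frac{(a+u)^{k-n}}{w^{k-n}} = \frac{1}{w^{2n-k}}\left(\frac{a}{W} + \frac{bu-av}{Ww}\right)^{k-n}$ only when $k \le 2n$; since here $k - 2n > 0$ one instead multiplies and divides appropriately), every term of the binomial expansion \emph{except} the one proportional to $(bu-av)^{k-n}$ sums to zero over $(u,v) \in A_{<d}^2$. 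Each such intermediate term is, after clearing, a polynomial in $u$ and $v$ of total degree strictly less than $k-n$ in the combined variables, with coefficients built from $a,b,z$; summing such a polynomial over the $\mathbb{F}_q$-vector space $A_{<d}^2$ (which for $d$ large is a sum of $S_{j,d}$-type expressions in each variable separately, after expanding $(uz)^i v^{k-2n-i}$ and similar) gives zero by Lemma~\ref{keylemma}, exactly as in the proof of Lemma~\ref{lem2}. I expect this bookkeeping --- tracking which monomials in $u,v$ appear and confirming each has an exponent in the vanishing range $1 \le j \le k-2n$ once $d \ge d_{k-2n}$ --- to be the main obstacle; it requires care because the binomial expansion of the $(k-n)$th power interacts with the already-present factor, and one must also separately check that the $(u,v) = (0,0)$ term contributes $(a^{k-n})/((az+b)^n)$ on the left but is correctly \emph{excluded} on the right (its ``value'' $(b\cdot 0 - a \cdot 0)^{k-n} = 0$ so the prime is harmless, which reconciles the two sides). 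Once the vanishing of all but the top term is established, the surviving term is precisely $\sum'_{(u,v)} \frac{(bu-av)^{k-n}}{(az+b)^{k-n}((a+u)z+(b+v))^n}$, which is the right-hand side, completing the proof.
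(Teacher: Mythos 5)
Your starting point is sound: the identity $\frac{a+u}{w}=\frac{a}{W}+\frac{bu-av}{Ww}$ (with $w=(a+u)z+b+v$, $W=az+b$) is exactly equivalent to the cancellation $(bu-av)-(a+u)(az+b)=-a\bigl((a+u)z+b+v\bigr)$ that drives the paper's proof, and your reconciliation of the primed/unprimed sums via $(b\cdot 0-a\cdot 0)^{k-n}=0$ is correct. But the execution has a genuine gap. You write $\frac{(a+u)^{k-n}}{w^n}=w^{k-2n}\bigl(\frac{a}{W}+\frac{bu-av}{Ww}\bigr)^{k-n}$ and expand binomially, claiming that every term other than the $(bu-av)^{k-n}$ one is, after clearing, a \emph{polynomial} in $u,v$ to which Lemma~\ref{keylemma} applies. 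That is false for the terms of index $j$ with $k-2n<j<k-n$: those carry a factor $w^{k-2n-j}$ with \emph{negative} exponent, i.e.\ they retain $((a+u)z+b+v)$ in the denominator, so they are not polynomials in $(u,v)$ and Lemma~\ref{keylemma} says nothing about them. (This range is nonempty whenever $n\geq 2$.) These terms can in fact be discarded, but only because $\binom{k-n}{j}\equiv 0\bmod p$ for every $j$ in that range --- by Lucas' theorem the nonvanishing binomial coefficients force $p^{\valu_p(k-n)}\mid j$, and since $n\leq p^{\valu_p(k-n)}$ there is no such multiple strictly between $k-2n$ and $k-n$. That observation is precisely where the hypothesis $n\leq p^{\valu_p(k-n)}$ enters, and your proposal never invokes it in the binomial route; without it the argument collapses. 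The paper sidesteps the issue entirely by using the factorization $X^{k-n}-Y^{k-n}=(X-Y)^nY^{k-2n}F(X/Y)$ with $X=bu-av$, $Y=(a+u)(az+b)$, so that the factor $(X-Y)^n=(-aw)^n$ cancels $w^n$ exactly and what remains is visibly a polynomial in $u,v$.

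Two smaller points. First, your degree bookkeeping ("total degree strictly less than $k-n$") is not sufficient to invoke Lemma~\ref{keylemma} with $d\geq d_{k-2n}$; you need each individual exponent of $u$ and of $v$ to lie in $[1,k-2n]$. This does hold for the surviving terms (for $j\leq k-2n$ the monomials of $(bu-av)^jw^{k-2n-j}$ have exponents summing to at most $k-2n$), but it must be checked, and the exponent-$0$ monomials have to be handled by $\sum_{u\in\AA_{<d}}1=q^d=0$ in characteristic $p$, which is not part of Lemma~\ref{keylemma} as stated. Second, you drift between the $F(T)$-factorization and the binomial expansion without committing to either; the first route is the paper's and works as written, the second can be repaired as above but requires the Lucas-theorem input you omitted.
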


\begin{proof}
Assume that $(u, v) \neq (0, 0)$. Then
\[
 \frac{(bu - av)^{k-n}}{(az + b)^{k-n} ((a+u)z + b+v)^n} - \frac{(a+u)^{k-n}}{((a+u)z + b + v)^n} 
\]
is equal to
\[
\frac{((bu-av) - (a+u)(az+b))^n \sum_{i = 0}^{k -2n} \aalpha_i (bu-av)^i ((a+u)(az+b))^{k-2n-i}}{(az + b)^{k-n}((a+u)z + b+v)^n}. 
\]
Here we have used the identity $X^{k-n} - Y^{k-n} = Y^{k-2n} (X-Y)^n F(X/Y)$. 
Since $(bu-av) - (a + u)(az + b) = -a ((a+u)z + b+v)$, the last expression reduces to
\[
  \frac{(-a)^n \sum_{i = 0}^{k -2n} \aalpha_i (bu-av)^i ((a+u)(az+b))^{k-2n-i}}{(az + b)^{k-n}}.
\]
For $1 \leq i \leq k-2n$, we consider
\[
  \sideset{}{'} \sum_{(u, v) \in \AA_{<d}^2} (bu - av)^i (a+u)^{k - 2n -i}.
\]
Expanding $(bu - av)^i$ by the binomial theorem and summing over $v$, we see by Lemma \ref{keylemma} that only the term $(bu)^i (a + u)^{k-2n-i}$ remains. Thus
\[
 \sideset{}{'} \sum_{(u, v) \in \AA_{<d}^2} (bu - av)^i (a+u)^{k - 2n -i} = \sideset{}{'} \sum_{u \in \AA_{<d}} (bu)^i (a + u)^{k-2n-i} .
\]
Expanding $(a + u)^{k -2 n - i}$ by the binomial theorem and summing over $u$, we obtain $0$ by Lemma \ref{keylemma}. 

For $i = 0$ we have
\[
 \begin{aligned}
 \sideset{}{'} \sum_{(u, v) \in \AA_{<d}^2} (a + u)^{k-2n} & = \sum_{u \neq 0} (a + u)^{k-2n} \sum_{v \in \AA_{<d}} 1 + a^{k-2n} \sum_{v \neq 0} 1 \\
 & = -a^{k-2n}.
\end{aligned}
\]
Therefore,
\[
  \sideset{}{'} \sum_{(u, v) \in \AA_{<d}^2} \frac{(-a)^n \sum_{i = 0}^{k -2n} \aalpha_i (bu-av)^i ((a+u)(az+b))^{k-2n-i}}{(az + b)^{k-n}}
\]
equals
\[
  \frac{(-1)^{n+1} a^{k-n}}{(az + b)^n} \aalpha_0.
\]
But $\aalpha_0 = (-1)^{n+1}$ (we are using $k \neq 2n$ here) and the lemma follows.
\end{proof}

\emph{Proof of Theorem~\ref{mainthm}.} In order to simplify notation, we impose the following conventions: in what follows we will assume that $a \in T^d \AA$, $b \in T^d \AA$, $u \in \AA_{<d}$ and $v \in \AA_{<d}$.

Define
\[
 \phi_{k, n} (z) := \sideset{}{'} \sum_{(u, v) } \frac{u^{k-n}}{(uz + v)^n} + \sideset{}{'} \sum_{(a, b) } \sideset{}{'} \sum_{(u, v) } \frac{(bu-av)^{k-n}}{(az + b)^{k-n} ((a+u)z + b + v)^n}.  
\]
Note that the first sum is finite, while the second sum converges, since each term is bounded by $\frac{1}{\mini \{ |a|, |b| \}^n}$ in absolute value.
We compute
\[
\begin{aligned}
 \phi_{k, n} \left ( \frac{-1}{z} \right) & =  \sideset{}{'} \sum_{(u, v)} \frac{u^{k-n} z^n}{(vz - u)^n} + \sideset{}{'} \sum_{(a, b)} \sideset{}{'} \sum_{(u, v)} \frac{z^k(bu-av)^{k-n}}{(bz-a)^{k-n} ((b + v)z - (a + u))^n} \\
\end{aligned}
\]
which by Lemma~\ref{lem2} equals
\[
\begin{aligned}
\qquad \quad &  \sideset{}{'} \sum_{(u, v)} \frac{(vz)^{k-n} z^n}{(vz - u)^n} + \sideset{}{'} \sum_{(a, b)} \sideset{}{'} \sum_{(u, v) } \frac{z^k(bu-av)^{k-n}}{(bz-a)^{k-n} ((b + v)z - (a + u))^n}. \\
\end{aligned}
\]
By replacing $u$ with $-u$ and $a$ with $-a$, we have
\[
\begin{aligned}
\qquad \quad & z^k \sideset{}{'} \sum_{(u, v)} \frac{v^{k-n}}{(vz + u)^n} + z^k \sideset{}{'} \sum_{(a, b) } \sideset{}{'} \sum_{(u, v)} \frac{(av-bu)^{k-n}}{(bz+a)^{k-n} ((b + v)z +(a + u))^n} \\
& = z^k \phi_{k, n}(z).
\end{aligned}
\]
Therefore, we have the correct functional equation with respect to $z \mapsto -1/z$.

It remains to show that $\phi_{k, n}$ has an $A$-expansion. By Lemma~\ref{lem3}
\[
 \sideset{}{'} \sum_{(a, b)} \sideset{}{'} \sum_{(u, v)} \frac{(bu-av)^{k-n}}{(bz-a)^{k-n} ((b + v)z - (a + u))^n}  = \sideset{}{
'}\sum_{(a, b)} \sum_{(u, v)} \frac{(a + u)^{k-n}}{((a+u) z + b + v)^n}. 
\]
Thus the sum defining $\phi_{k, n}$ is equal to
\[
\begin{aligned}
 & \sideset{}{'} \sum_{(u, v)} \frac{u^{k-n}}{(uz + v)^n} + \sideset{}{'} \sum_{b } \sum_{(u, v)} \frac{(bu)^{k-n}}{b^{k-n}(uz + b + v)^n} \\
  & \qquad \qquad \qquad \ \ \ \  + \sideset{}{'} \sum_{a} \sum_{b} \sum_{(u, v)} \frac{(a+u)^{k-n}}{(az + b)^{k-n} ((a+u)z + b + v)^n},
\end{aligned}
\]
which, after multiplying by $1/\ppi^{k-n}$, becomes
\[
 \sum_{u \in \AA_{<d}} u^{k-n} G_n(t_u) + \sum_{a \in T^d \AA} a^{k-n} G_n(t_a).
\]
Finally, notice that $G_n(t_{\theta a}) = \theta^{-n} G_n(t_a)$ and hence the expression above is precisely
\[
  - \sum_{a \in \AA_+} a^{k-n} G_n(t_a).
\]
This shows that $\phi_{k, n}$ is invariant under translations by $\AA$ (i.e., invariant under $z \mapsto z+a$ for all $a \in \AA$) and that 
\[
  \frac{-1}{\ppi^{k-n}} \phi_{k, n} = f_{k, n} = \sum_{a \in A_+} a^{k-n} G_n(t_a) \in S_{k, n}(\GL). \qquad \qquad \qquad ~\square
\]

\begin{rmk}
We want to briefly mention two cases outside of Theorem~\ref{mainthm} which are of interest.

First, if $k = n$, we can make the same definition for $\phi_{k, n}$, i.e.,
\[
  \phi_{k, n} (z) = \sideset{}{'} \sum_{(u, v) } \frac{1}{(uz + v)^n} + \sideset{}{'} \sum_{(a, b) } \sideset{}{'} \sum_{(u, v) } \frac{1}{ ((a+u)z + b + v)^n}.  
\]
Then $\phi_{k, n}$ has the correct functional equation under $z \mapsto -1/z$, but Lemma \ref{lem3} does not apply, since
\[
 \sideset{}{'} \sum_{(a, b) } \sideset{}{'} \sum_{(u, v) } \frac{1}{ ((a+u)z + b + v)^n} \neq  \sum_{(a, b) } \sideset{}{'} \sum_{(u, v) } \frac{1}{ ((a+u)z + b + v)^n}.  
\]
Therefore, $\phi_{k, n}$ does not have a $t$-expansion. To fix this we add the $(u, v) = (0, 0)$ term to the double sum. The resulting expression is essentially the non-normalized Eisenstein series $E_n$: 
\[
   \phi_{k, n} + \sideset{}{'} \sum_{(a, b)} \frac{1}{ (az + b)^n} = \ppi^n E_n.
\]

\

The second case is $k = 2n$. As $n \leq p^{\valu_p(k-n)}$, we see that in this case $k-n = n = p^\nu$ for some non-negative integer $\nu$. We define $\phi_{k, n}$ as in the proof. Using Lemma~\ref{lem2}, we obtain
\[
  \phi_{k, n} \left ( \frac{-1}{z} \right) = z^k \phi_{k, n} + z^n.
\]
Lemma~\ref{lem3} does not hold, but it is replaced by the equation
\[
 -\sum_{(u, v) \in \AA_{<d}^2} \frac{(a+u)^{k-n}}{((a+u)z + b + v)^n} = \sideset{}{'} \sum_{(u, v) \in \AA_{<d}^2}  \frac{(bu - av)^{k-n}}{(az + b)^{k-n} ((a+u)z + b+v)^n}.
\]
Therefore, if we define
\[
 \phi_{k, n}^\ast  := \sideset{}{'} \sum_{(u, v) } \frac{u^{k-n}}{(uz + v)^n} - \sideset{}{'} \sum_{(a, b) } \sideset{}{'} \sum_{(u, v) } \frac{(bu-av)^{k-n}}{ (az + b)^{k-n} ((a+u)z + b + v)^n},
\]
we have 
\[
   \phi_{k, n}^\ast \left ( \frac{-1}{z} \right ) = z^k \phi_{k, n}^\ast + z^n
\]
and
\[
 \frac{-1}{\ppi^{k-n}} \phi_{k, n}^\ast = f_{k, n}^\ast =  \sum_{a \in \AA_+} a^{k-n} G_n (t_a) = \sum_{a \in \AA_+} a^{p^\nu} G_{p^\nu}(t_a). 
\]
The first equation resembles the functional equation of a Drinfeld quasi-modular form (see \cite[Def.~2.1]{BoPe}) and the second equation shows that $f_{k, n}^\ast$ is $E^{p^\nu}$, the $p^\nu$-th power of the false Eisenstein series (\cite[(8.2)]{Gek}):
\[
	E  := \sum_{a \in \AA_+} a t_a.
\]
\end{rmk}

\section{Acknowledgements}

The present work grew out of the author's Ph.D. thesis \cite{thesis}. The author is greatly indebted to Dinesh Thakur for suggesting the original project that became the dissertation. His support, patience, and insistence on clarity and precision have been instrumental for the successful completion of the present work. The author wants to thank Bartolom\'{e} L\'{o}pez, Gebhard B\"{o}ckle, David Goss, Ernst-Ulrich Gekeler, Matthew Johnson, Frederico Pellarin and Romyar Sharifi, for making numerous suggestions and comments about previous version of the present work. The author also wants to acknowledge the support of the University of Arizona during the writing of \cite{thesis}.

\bibliographystyle{amsplain}
\bibliography{art}

\end{document}